\documentclass[11pt]{article}

\textheight 8.5truein
\parskip 0.1in
\topmargin 0.25in
\headheight 0in
\headsep 0in
\textwidth 6.5truein
\oddsidemargin  0in
\evensidemargin 0in

\parindent0pt

\usepackage{amsmath,amssymb,amsthm,color}
\usepackage{algorithm,algorithmicx,algpseudocode}
\usepackage{subcaption}
\usepackage[hidelinks,colorlinks=true,linkcolor=blue,citecolor=black]{hyperref}
\usepackage{fancybox,graphicx,bm,float}               

\newtheorem{definition}{Definition}
\newtheorem{assumption}{Assumption}
\newtheorem{fact}{Fact}
\newtheorem{theorem}{Theorem}

\newtheorem{proposition}[theorem]{Proposition}

\newtheorem{remark}{Remark}[section]
\newtheorem{example}{Example}

\newcommand{\R}{\mathbb{R}}

\newcommand{\cM}{\mathcal{M}}
\newcommand{\cA}{\mathcal{A}}

\newcommand{\cS}{\mathcal{S}}
\newcommand{\cC}{\mathcal{C}}

\newcommand{\cY}{\mathcal{Y}}

\newcommand{\cX}{\mathcal{X}}
\newcommand{\cV}{\mathcal{V}}

\newcommand{\bpmat}{\begin{pmatrix}}
\newcommand{\epmat}{\end{pmatrix}}
\newcommand{\Symn}{\mathcal{S}^{n \times n}}

\newcommand{\Tr}{\text{Tr}}
\newcommand{\bx}{\bm{x}}

\newcommand{\by}{\bm{y}}
\newcommand{\bZ}{\bm{Z}}

\newcommand{\bb}{\bm{b}}
\newcommand{\bd}{\bm{d}}

\newcommand{\bz}{\bm{z}}
\newcommand{\bX}{\bm{X}}
\newcommand{\bY}{\bm{Y}}

\newcommand{\bQ}{\bm{Q}}

\newcommand{\bA}{\bm{A}}

\newcommand{\bC}{\bm{C}}
\newcommand{\bP}{\bm{P}}

\newcommand{\blambda}{\bm{\lambda}}
\newcommand{\bLambda}{\bm{\Lambda}}

\newcommand{\diag}{\operatorname{diag}}
\newcommand{\Diag}{\operatorname{Diag}}

\DeclareMathOperator*{\argmin}{argmin}

\newcommand{\Rmnum}[1]{\uppercase\expandafter{\romannumeral #1}} 
\usepackage[raggedright]{titlesec}
\titleformat{\chapter}{\centering\Huge\bfseries}{Chapter \Rmnum{\thechapter} }{1em}{} 

\usepackage{mathrsfs} 
\usepackage{enumerate} 
\graphicspath{{Figures/}}

\usepackage{multirow}
\usepackage{stmaryrd}

\title{ 
Spectrally Constrained Optimization
}

\author{ Casey Garner\thanks{School of Mathematics, University of Minnesota (\href{mailto:garne214@umn.edu}{garne214@umn.edu}, \href{mailto:lerman@umn.edu}{lerman@umn.edu}) }
\hspace{1cm}
Gilad Lerman\footnotemark[1]
\hspace{1cm}
Shuzhong Zhang\thanks{Department of Industrial and Systems Engineering (\href{mailto:zhangs@umn.edu}{zhangs@umn.edu})}}

\date{\today}

\begin{document}
\maketitle

\vspace{-0.2in}
\begin{abstract}
{We investigate how to solve smooth matrix optimization problems with general linear inequality constraints on the eigenvalues of a symmetric matrix. We present solution methods to obtain exact global minima for linear objective functions, i.e., $F(\bX) = \langle \bC, \bX \rangle$, and perform exact projections onto the eigenvalue constraint set. Two first-order algorithms are developed to obtain first-order stationary points for general non-convex objective functions. Both methods are proven to converge sublinearly when the constraint set is convex. Numerical experiments demonstrate the applicability of both the model and the methods. 

\vspace{3mm}
    \noindent\textbf{Keywords:} 
eigenvalue optimization, constrained optimization, Frank-Wolfe algorithm, non-smooth analysis

\vspace{3mm}
    \noindent\textbf{MSC codes:} 
90C26, 90C52, 65K10, 68W40
}

\end{abstract}

\section{Introduction}
Constrained matrix optimization is an essential aspect of machine learning \cite{duchi2011adaptive}, matrix factorization and completion \cite{mat_completion,matrix_completion_with_noise,mat_factor}, semidefinite programming \cite{vandenberghe1996semidefinite}, robust subspace recovery \cite{robust_subspace_recovery_lerman,rsr} and covariance estimation \cite{fan2016overview}. 
There are two types of constraints to consider in matrix optimization: coordinate constraints and spectral constraints. 
Coordinate constraints impose prohibitions on the entries of the matrix, such as the diagonal elements must equal one or the row and column sums must be unity. Spectral constraints force the eigenvalues or singular values to satisfy certain conditions. For example, assuming a matrix is positive semidefinite enforces non-negativity on the eigenvalues. The literature is replete with a diverse array of coordinate constraints, but the variety of spectral constraints is limited. Given the power of constrained matrix optimization, the lack of general spectral constraints, with corresponding theory and algorithmic development, is a knowledge gap worth filling. 

The goal of this paper is to explore spectrally constrained matrix optimization (SCO) and develop theory and algorithms to solve new models with general spectral constraints. We begin with the following spectrally constrained eigenvalue model     
\begin{align}\label{eqn:SCO-Eig}\tag{SCO-Eig}
\min&\; F(\bX) \\
\text{s.t.}&\; \bA \lambda(\bX) \leq \bb \nonumber \\
           &\; \bX \in \Symn \nonumber 
\end{align}
where $\Symn$ denotes the set of real $n$-by-$n$ symmetric matrices, $F: \Symn \rightarrow \R$ is continuously differentiable, $\bA \in \R^{m \times n}$, $\bb \in \R^m$, and $\lambda(\bX) := \left(\lambda_1(\bX), \cdots, \lambda_n(\bX)\right)^\top \in \R^n$ is the vector of eigenvalues of $\bX$ in descending order, i.e., $\lambda_1(\bX) \geq \lambda_2(\bX) \geq \cdots \geq \lambda_n(\bX)$. \eqref{eqn:SCO-Eig} allows general linear inequality constraints on the eigenvalues; therefore, the model encompasses traditional spectral constraints, such as non-negativity, while significantly boosting the modeling power of the practitioner. As an example, 
low-rank conditions are important and popular in matrix optimization \cite{li2023partial,zhu2018global}. The general low-rank model, 
$
\min\{ F(\bX) \;|\;  \text{rank}(\bX) \leq k, \; \bX \in \Symn_+\}\;
$
where $\Symn_+$ denotes the set of real $n$-by-$n$ positive semidefinite matrices, can be written in the form of \eqref{eqn:SCO-Eig} as 
$
\min\left\{ F(\bX) \;|\; \lambda_n(\bX)  \geq  0, \lambda_{k+1}(\bX) \leq 0, \; \bX \in \Symn\right\} 
$
 or approximated as 
$
\min\left\{ F(\bX) \;|\;  \lambda_i(\bX)  \in [0,\delta], i=k+1, \hdots, n, \; \bX \in \Symn\right\},
\;$
  where $\delta \geq 0$ controls the accuracy of the approximation. 

This paper presents the first study of \eqref{eqn:SCO-Eig} and is a departure from the current corpus devoted to matrix optimization. Our approach offers a change of perspective which has somehow escaped scrutiny by the community. 
To demarcate the novelty of our framework, we embark on a brief excursion into the literature.   

\subsection{Literature Review}
Matrix optimization with an emphasis on the spectrum has been a subject of rigorous exploration for decades. These efforts have amassed a sizable body of work known as {\it eigenvalue optimization} \cite{cullum1975minimization,lewis2003mathematics,lewis1996eigenvalue,mengi2014numerical,overton1992large,overton1988minimizing,overton1993optimality,shapiro1995eigenvalue,ying2012distance}. The main focus of these papers is minimizing functions of the eigenvalues or singular values of a constrained parameterized matrix. A general framework often seen in eigenvalue optimization models is,  
\begin{align}\label{eqn:eig_opt_1}
\min&\; f(\blambda(\cA(\bx))) \\ 
\text{s.t.}&\; \bx \in \Omega \subseteq \R^m, \nonumber 
\end{align}
where $\cA:\R^m \rightarrow \Symn$ is a smooth map and $\Omega$ could be a strict subset of $\R^m$. Many of these works focus on a specific form of the objective function in \eqref{eqn:eig_opt_1}, such as minimizing the maximum eigenvalue of a parameterized matrix \cite{overton1992large} or minimizing a weighted-sum of the $k$-largest eigenvalues or singular values \cite{cullum1975minimization,overton1993optimality,ying2012distance}. According to Overton \cite{overton1993optimality}, the work of Cullum, Donath and Wolfe in 1975 seems to be the first study of minimizing the sum of the $k$-largest eigenvalues of a parameterized matrix. In particular, Cullum et al.~investigated minimizing $f_{\kappa}(\bx) = \sum_{i=1}^{\kappa} \lambda_i(\cA(\bx))$ where $\cA(\bx) = \bA_0 + \diag(\bx)$ for a fixed matrix $\bA_0$. In the case of Hermitian matrices, Mengi et al.~\cite{mengi2014numerical} provide a general framework similar to \eqref{eqn:eig_opt_1}, and Kangal et al.~\cite{kangal2018subspace} consider an infinite dimensional problem by minimizing the $k$-th largest eigenvalue of a compact self-adjoint operator. Early overviews of eigenvalue optimization are provided by Lewis and Overton \cite{lewis2003mathematics,lewis1996eigenvalue}. 

The seminal work of Cullum, Donath and Wolfe focused on understanding the non-smoothness of $f_\kappa$, and the study of non-smooth functions composed of eigenvalues and singular values has remained a common direction of inquiry. We see this in the works of Overton \cite{overton1992large,overton1988minimizing,overton1993optimality} and Lewis \cite{hiriart1999clarke,lewis1996derivatives,lewis1999nonsmooth,lewis2001twice} where much discussion surrounds computing the subdifferentials of functions whose arguments are eigenvalues and singular values. 
A particularly important setting where subdifferentials and derivatives are readily available in eigenvalue optimization is that of {\it spectral functions}. Spectral functions are a staple in the eigenvalue optimization literature \cite{beck2017first,lewis1996derivatives,lewis2001twice}. 
The function $F:\Symn \rightarrow \R$ is a spectral function (over $\Symn$) if $F(\bX) = f(\blambda(\bX))$ for some {\it symmetric function} $f:\R^n \rightarrow \R$ where $f$ {\it symmetric} means $f(\bx) = f(\bm{P}\bx)$ for all permutation matrices $\bm{P} \in \R^{n\times n}$. The key feature of spectral functions is they are independent of the order of the eigenvalues, e.g., $F(\bX) = \text{Tr}(\bX) = \sum_{i=1}^{n} \lambda_i(\bX)$ is a spectral function. Example 7.13 in \cite{beck2017first} presents a number of common spectral functions encountered in the literature.   

Studies on how to avoid non-smoothness also exist. Shairo and Fan \cite{shapiro1995eigenvalue} detailed how to avoid some issues of non-smoothness at repeated eigenvalues by proving under certain conditions the set $\left\{ \bx \in \R^m \; | \; \lambda_1(\mathcal{A}(\bx)) = \cdots = \lambda_k(\mathcal{A}(\bx))\right\}$ is a smooth manifold near a point $\bx^*$ where $\lambda_1(\mathcal{A}(\bx^*))$ has multiplicity and $\mathcal{A}$ is a smooth map from $\R^m$ to $\Symn$.

\subsection{Contributions: A New Perspective}
A common thread present in the eigenvalue optimization literature is minimizing objective functions solely dependent on the spectrum of the decision matrix. As a result, these papers fixate on exploring the non-smoothness of these objectives. This has led to beautiful mathematics and robust application; however, the success of these models has simultaneously limited the scope of the investigation. Constrained matrix optimization revolves around coordinate and spectral constraints, and the current eigenvalue optimization literature has not expanded our understanding of general spectral constraints. 

Our work on \eqref{eqn:SCO-Eig} begins a novel study to understand eigenvalues being present functionally in the constraint set. Ours is a perspective shift, instead of focusing on eigenvalues in the objective, we focus on eigenvalues in the constraint, and, to our surprise, this perspective seems uncommon. Almost no papers consider general functional constraints on the eigenvalues, e.g., $g(\lambda(\bX)) \leq 0$, and those that do restrict themselves to spectral functions. In the very recent work \cite{li2023partial}, which appeared while we were finishing this paper, the authors allow spectral functions in the constraint set they consider; however, they do not go beyond this paradigm and they further require convexity of the spectral functions. We do not assume the constraint set in \eqref{eqn:SCO-Eig} is composed of spectral functions, and, unlike \cite{li2023partial}, we develop approaches to solve our model without convexification. Another work which considers general functional eigenvalue constraints is \cite{jourani2005error}. In this paper, the authors investigate error bound conditions for constraint sets of the form $\left\{\bX \in \Symn \; | \; f(\blambda(\bX)) \leq 0 \right\}$ where $f:\R^n \rightarrow \R$, but they did not utilize their results to investigate matrix optimization.  

The main contribution of this work is the first study of a matrix optimization model with general linear inequality constraints on the eigenvalues. We prove in the case of a linear objective function that a global minimum of \eqref{eqn:SCO-Eig} can be computed regardless of the non-convexity of the constraint set, and we prove how to perform optimal projections onto the constraint set. With these results, we develop two first-order algorithms to compute stationary points to \eqref{eqn:SCO-Eig} and provide a proof of their sublinear convergence rate. Numerical experiments demonstrate the applicability of our procedures. 

\subsection{Organization} 
The organization of the paper is as follows. 
Section \ref{sec:constraint_set} presents and proves some facts about the eigenvalue constraint set, such as its connectedness.
Section \ref{sec:opt_conditions} provides a discussion of necessary optimality conditions for \eqref{eqn:SCO-Eig} and an approximated version of the model which avoids the non-smoothness associated with repeated eigenvalues. Sections \ref{sec:lin_obj} and \ref{sec:projection} demonstrate we can solve essential instantiations of \eqref{eqn:SCO-Eig} which form crucial steps in classical optimization algorithms. Namely, Section \ref{sec:lin_obj} proves solving \eqref{eqn:SCO-Eig} with a linear objective function can be done exactly, regardless of the non-convexity of the constraint, and only requires solving a linear program and performing a spectral decomposition. Section \ref{sec:projection} solves the projection problem onto the eigenvalue constraint set. From these results, in Section \ref{sec:solvers} we present a projected gradient method and a Frank-Wolfe algorithm which obtain first-order $\epsilon$-stationary points to \eqref{eqn:SCO-Eig} when the constraint set is convex and the objective function is smooth though possibly non-convex. Section \ref{sec:experiments} displays the results of numerical experimentation; Section \ref{sec:precond_example} applies both methods to the question of determining a preconditioning matrix for solving linear systems, and Section \ref{sec:quadratic_equations} investigates the solving of systems of quadratic equations with our projected gradient method. We conclude the paper in Section \ref{sec:conclusion} with directions for future inquiries.

In this paper, scalars, vectors and matrices are denoted by lower case, bold lower case and bold upper case letters respectively, e.g., $x$, $\bm{y}$ and $\bm{Z}$. Given the positive integer $k$, $[k]:=\left\{1,\hdots, k\right\}$. If $\bx \in \R^n$, $\Diag(\bx)$ denotes the diagonal matrix whose diagonal is $\bx$. $\cS^{n\times n}$, $\cS^{n\times n}_+$, and $\cS^{n\times n}_{++}$ denote the set of real $n$-by-$n$ symmetric, positive semidefinite, and positive definite matrices respectively. Let $\mathcal{O}(n):=\left\{ \bX \in \R^{n\times n} \; | \; \bX^\top \bX = \bm{I} \right\}$ denote the set of orthogonal matrices where $\bm{I}$ is the identity matrix.   

\section{The Eigenvalue Constraint Set}\label{sec:constraint_set}
The paradigm shift we are pursuing rests on the eigenvalues being functionally present in the constraints. Due to the limited attention received by such settings, we begin by proving some general facts about the constraint set in \eqref{eqn:SCO-Eig}, which we denote as
\begin{equation}\label{eqn:eig_constraint_set}
\mathcal{S}_{\blambda}(\bA,\bb) := \left\{ \bX \in \Symn \; | \; \bA \blambda(\bX) \leq \bb\right\}. \end{equation}
We shall desire to take advantage of writing the constraint without ordering the eigenvalues. To this end, let $\blambda_{\bX} \in \R^n$ be the unordered vector of eigenvalues for $\bX\in \Symn$; we define the matrix $\bm{D}_n \in \R^{(n-1)\times n}$ to enforce the descending order of the eigenvalues, i.e., the $i$-th row of $\bm{D}_n$ is $\bm{e}_{i+1} - \bm{e}_{i}$ where $\bm{e}_i \in \R^n$ has a one in the $i$-th place and zeros elsewhere. Thus, an equivalent manner of writing \eqref{eqn:eig_constraint_set} is 
$
\mathcal{S}_{\blambda}(\bA,\bb) = \left\{ \bX \in \Symn \; | \; \bA \blambda_{\bX} \leq \bb,\; \bm{D}_n \blambda_{\bX} \leq \bm{0} \right\}.\;  
$
Our first fact demonstrates the unsurprising but key truth that $\mathcal{S}_{\blambda}(\bA,\bb)$ has the potential to be convex and non-convex as a function of the problem's data. 
\begin{fact}\label{fact:one}
$\mathcal{S}_{\blambda}(\bA,\bb)$ can be both convex and non-convex dependent on $\bA \in \R^{m\times n}$ and $\bb \in \R^m$.
\end{fact}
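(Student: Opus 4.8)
The plan is to prove this existence-type statement by exhibiting two concrete instances of the data $(\bA, \bb)$: one producing a convex set $\mathcal{S}_{\blambda}(\bA,\bb)$ and one producing a non-convex set. Since the claim is merely that both possibilities \emph{can} occur, a single well-chosen example for each case suffices, and I would keep the dimension $n$ as small as possible (say $n=2$) to make the eigenvalue expressions explicit and the convexity checks transparent.

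For the convex case, the natural first choice is to encode a constraint whose eigenvalue set is known to be convex. The prototypical example is the positive semidefinite cone: taking the single constraint $\lambda_n(\bX) \geq 0$, i.e. requiring all eigenvalues to be nonnegative, yields $\mathcal{S}_{\blambda}(\bA,\bb) = \Symn_+$, which is a convex cone. In the language of the model this is $\bA \blambda(\bX) \leq \bb$ with one row of $\bA$ selecting $-\lambda_n(\bX)$ and $\bb = \bm{0}$. More generally, constraints of the form $\lambda_1(\bX) \leq t$ (a bound on the largest eigenvalue) also define convex sets, since $\lambda_1(\cdot)$ is a convex function of the matrix and its sublevel sets are convex. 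I would cite the convexity of $\lambda_1$ (equivalently, the concavity of $\lambda_n$) as the underlying reason.

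For the non-convex case, the strategy is to impose a constraint that forces the eigenvalues to avoid some middle region, thereby creating a disconnected or indented feasible set. A clean choice is to bound the smallest eigenvalue from below by a positive constant, e.g. $\lambda_n(\bX) \geq 1$, equivalently requiring $\bX \succeq \bm{I}$; alternatively, and more strikingly for non-convexity, one can require $\lambda_n(\bX) \geq c > 0$ \emph{together with} a constraint that creates nonconvexity, or simply take a single constraint like $\lambda_1(\bX) \geq 1$, since $\lambda_1$ is convex and hence its \emph{superlevel} set $\{\bX : \lambda_1(\bX) \geq 1\}$ is generally non-convex. To verify non-convexity I would produce two explicit $2\times 2$ symmetric matrices $\bX_1, \bX_2$ satisfying the constraint whose midpoint $\tfrac{1}{2}(\bX_1 + \bX_2)$ violates it; for instance $\bX_1 = \Diag(1,0)$ and $\bX_2 = \Diag(0,1)$ each have $\lambda_1 = 1$, but their average $\tfrac{1}{2}\bm{I}$ has $\lambda_1 = \tfrac{1}{2} < 1$.

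The main obstacle, such as it is, lies not in the convex case but in rigorously certifying non-convexity: one must check that the chosen witnesses $\bX_1, \bX_2$ genuinely lie in the set while their convex combination does not, which requires correctly computing the ordered eigenvalues of each matrix. This is routine for diagonal $2\times 2$ examples but demands care that the constraint is written consistently with the descending-order convention $\lambda_1(\bX) \geq \lambda_2(\bX)$ used in the paper. Once the two explicit examples are verified, the statement follows immediately, since convexity of $\mathcal{S}_{\blambda}(\bA,\bb)$ depends entirely on $(\bA,\bb)$ and we have displayed data realizing each alternative.
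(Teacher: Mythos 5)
Your proposal is correct and follows essentially the same strategy as the paper: exhibit the positive semidefinite cone for the convex case, and for the non-convex case produce two explicit $2\times 2$ witnesses in the feasible set whose midpoint violates the eigenvalue constraint. Your completed non-convex example ($\lambda_1(\bX)\geq 1$ with $\bX_1=\Diag(1,0)$, $\bX_2=\Diag(0,1)$) is in fact simpler than the paper's two-constraint example with non-diagonal witnesses. One small caution: the first candidate you float for non-convexity, $\lambda_n(\bX)\geq 1$ (equivalently $\bX \succeq \bm{I}$), is actually a convex set, being a translate of $\Symn_+$; fortunately the example you carry through and verify is the correct one.
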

\begin{proof}
If $\bA = -\bm{I}$ and $\bb = \bm{0}$, then $\mathcal{S}_{\blambda}(\bA,\bb) = \mathcal{S}^{n\times n}_{+}$ which is convex. Let $\bA \in \R^{2 \times 2}$ and $\bb \in \R^2$ enforce the constraint $\bX \in \mathcal{S}^{2 \times 2}$ with $\lambda_1(\bX) \geq 3$ and $\lambda_2(\bX) \leq 1$. One can easily produce examples where convex combinations of matrices satisfying these constraints fail these conditions. For example, the matrices 
\[
\bX_1 = \begin{pmatrix} 35 & 15 \\ 15 & 6 \end{pmatrix} \;\; \text{ and }\;\; \bX_2 = \begin{pmatrix} 4 & 17 \\ 17 & 63 \end{pmatrix}
\]
satisfy the eigenvalue constraints; however, $\bY:=\frac{1}{2}\left( \bX_1 + \bX_2 \right)$ fails the condition $\lambda_2(\bY) \leq 1$.  
\end{proof}

We further explore the non-convexity of $\mathcal{S}_{\blambda}(\bA,\bb)$ with the following example. 

\begin{example}\label{example:1}
Let 
$\mathcal{S}_{\blambda}(\bA,\bb) = \left\{\bX \in \mathcal{S}^{2\times 2} \; | \; \blambda_1(\bX) \in [3,5],\; \blambda_2(\bX) \in [0,2]   \right\}$. This set is non-convex. To visualize this, we take slices of the constraint set. Let
\[
\bX = \begin{pmatrix} x_1 & x_3 \\ x_3 & x_2\end{pmatrix} \;\; \text{ with } \; x_1, x_2, x_3 \in \R. 
\]
Fixing $x_3$ for different values, we plot in Figure \ref{fig:1} the values of $x_1$ and $x_2$ such that $\bX \in \mathcal{S}_{\blambda}(\bA,\bb)$. The three subplots in Figure \ref{fig:1} clearly display the non-convexity of the set. When $x_3=0$, the disjoint interval constraints become evident in the two square regions of the third subplot.  

\begin{figure}[H]
\includegraphics[width=\textwidth]{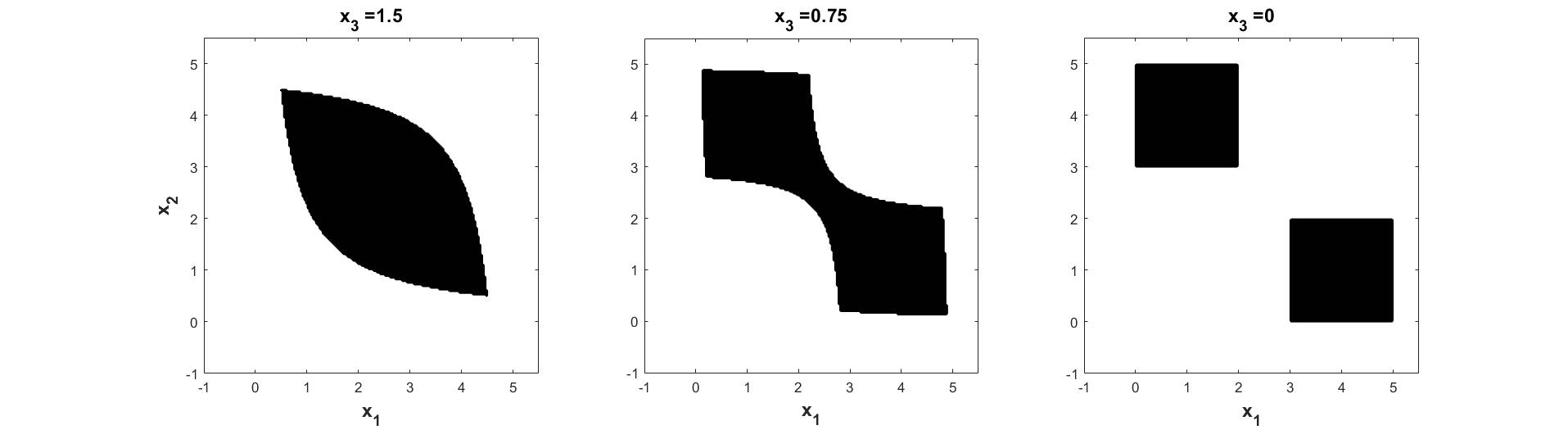}
\caption{Demonstration of $\mathcal{S}_{\blambda}(\bA,\bb)$ in Example~\ref{example:1}. Each subplot shows the black regions of coordinates $x_1$ and $x_2$ of $\bX \in \mathcal{S}_{\blambda}(\bA,\bb)$ for a fixed value of $x_3$, which is specified above the subplot.\label{fig:1}}
\end{figure}
\end{example} 

Though the constraint set in \eqref{eqn:SCO-Eig} may be non-convex, the feasible region, however, is always connected. 
\begin{fact}
$\mathcal{S}_{\blambda}(\bA,\bb)$ is a connected subset of $\Symn$. 
\end{fact}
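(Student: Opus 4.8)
The plan is to prove the stronger property that $\mathcal{S}_{\blambda}(\bA,\bb)$ is \emph{path}-connected, which immediately yields connectedness. The guiding principle is that the constraint is spectral: it depends only on the eigenvalues, so $\mathcal{S}_{\blambda}(\bA,\bb)$ is invariant under orthogonal conjugation, while the set of admissible eigenvalue vectors forms a convex polyhedron. I would exhibit, for any two points in the set, a path passing through their diagonal representatives.

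First I would reduce every point to a diagonal representative. Given $\bX \in \mathcal{S}_{\blambda}(\bA,\bb)$, write $\bX = \bQ \Diag(\blambda(\bX)) \bQ^\top$ with $\bQ \in \cO(n)$. Flipping the sign of a single column of $\bQ$ leaves $\bX$ unchanged but reverses $\det \bQ$, so I may assume $\bQ$ lies in the special orthogonal group $SO(n) = \{\bQ \in \cO(n) : \det \bQ = 1\}$. Since $SO(n)$ is path-connected, I choose a continuous path $\bQ(t)$, $t \in [0,1]$, in $SO(n)$ with $\bQ(0) = \bQ$ and $\bQ(1) = \bm{I}$, and set $\bX(t) := \bQ(t)\Diag(\blambda(\bX))\bQ(t)^\top$. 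Each $\bX(t)$ is orthogonally similar to $\Diag(\blambda(\bX))$, hence shares its ordered eigenvalue vector, $\blambda(\bX(t)) = \blambda(\bX)$; therefore $\bA\blambda(\bX(t)) = \bA\blambda(\bX) \leq \bb$, so the entire path lies in $\mathcal{S}_{\blambda}(\bA,\bb)$ and joins $\bX$ to $\Diag(\blambda(\bX))$.

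Next I would connect any two diagonal representatives. Using the unordered formulation introduced above, the admissible eigenvalue set $P := \{\blambda \in \R^n : \bA\blambda \leq \bb,\ \bm{D}_n \blambda \leq \bm{0}\}$ is an intersection of half-spaces, hence convex. For $\bX_1, \bX_2 \in \mathcal{S}_{\blambda}(\bA,\bb)$ the ordered vectors $\blambda(\bX_1), \blambda(\bX_2)$ lie in $P$, so the segment $\blambda(t) := (1-t)\blambda(\bX_1) + t\,\blambda(\bX_2)$ stays in $P$. Because $\bm{D}_n\blambda(t) \leq \bm{0}$, the entries of $\blambda(t)$ remain in descending order, so $\Diag(\blambda(t))$ has ordered eigenvalue vector exactly $\blambda(t)$ and satisfies $\bA\blambda(t) \leq \bb$; thus $t \mapsto \Diag(\blambda(t))$ is a path in $\mathcal{S}_{\blambda}(\bA,\bb)$ between the two diagonal representatives. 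Concatenating the pieces $\bX_1 \rightsquigarrow \Diag(\blambda(\bX_1)) \rightsquigarrow \Diag(\blambda(\bX_2)) \rightsquigarrow \bX_2$ produces a path between arbitrary points, establishing path-connectedness.

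The step requiring the most care is the first one: $\cO(n)$ is itself \emph{disconnected}, splitting into two components by the sign of the determinant, so one cannot run a continuous conjugating path through it directly. The resolution is the observation that the eigendecomposition of a symmetric matrix is insensitive to column sign changes of $\bQ$, which lets me restrict to the connected group $SO(n)$; this is the one place where a naive argument breaks down. The remaining steps are routine, resting only on the orthogonal-conjugation invariance of the set and the convexity of $P$.
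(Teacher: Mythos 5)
Your proof is correct and rests on exactly the same two ingredients as the paper's: the reduction to $SO(n)$ via column sign flips followed by path-connectedness of $SO(n)$, and the convexity of the eigenvalue polyhedron $\{\blambda : \bA\blambda \leq \bb,\ \bm{D}_n\blambda \leq \bm{0}\}$. The only cosmetic difference is that the paper performs the rotation and the eigenvalue interpolation simultaneously in a single homotopy $\bX(t) = \bm{G}(t)(\bLambda_2 + (1-t)(\bLambda_1-\bLambda_2))\bm{G}(t)^\top$, whereas you concatenate three paths through the diagonal representatives.
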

\begin{proof}
Let $\bX_1 = \bQ_1 \bLambda_1 \bQ_1^\top$ and $\bX_2 = \bQ_2 \bLambda_2 \bQ_2^\top$ be arbitrary elements of $\mathcal{S}_{\blambda}(\bA,\bb)$. 
Without loss of generality, we may assume $\bQ_1, \bQ_2 \in SO(n):=\left\{ X \in \mathcal{O}(n) \; | \; \det(\bX)=1 \right\}$. 
Moreover, we may assume without loss of generality the diagonal elements of $\bLambda_1$ and $\bLambda_2$ are in descending order. Using the fact $SO(n)$ is path connected\footnote{A short proof of this result is presented at: \url{https://www.math.tamu.edu/~rojas/son.pdf}}, there exists a continuous map $\bm{G}:[0,1]\rightarrow SO(n)$ such that $\bm{G}(0)=\bQ_1$ and $\bm{G}(1) = \bQ_2$. Thus, we can define the continuous parameterization $\bX:[0,1]\rightarrow \Symn$ such that 
\[
\bX(t) = \bm{G}(t) \left( \bLambda_2 + (1-t) \left(\bLambda_1 - \bLambda_2 \right)\right)\bm{G}(t)^\top. 
\]
By construction, $\bX(0)=\bX_1$ and $\bX(1)=\bX_2$, and the convexity of $\left\{ \blambda \in \R^n \; | \; \bA \blambda \leq \bb, \; \bm{D}_n \blambda \leq \bm{0} \right\}$ ensures $\bX(t) \in \mathcal{S}_{\blambda}(\bA,\bb)$ for all $t\in[0,1]$. 
\end{proof}

\begin{remark}
Figure \ref{fig:1}, especially the third subplot in Figure \ref{fig:1}, might seem to suggest a counter-example refuting the connectedness of $\mathcal{S}_{\blambda}(\bA,\bb)$. However, this is not the case. The stills in Figure \ref{fig:1} represent slices of the feasible region and paths connecting points in $\mathcal{S}_{\blambda}(\bA,\bb)$ are not restricted to slices.   
\end{remark}



%

The capacity for non-convexity in the constraint is the trade-off incurred by removing the eigenvalues from the objective. Potential non-smoothness of the objective function associated with the eigenvalues has been replaced with potential non-convexity associated with the eigenvalues in the constraint. Constrained problems over non-convex sets are exceptionally challenging; therefore, the cases where $\mathcal{S}_{\blambda}(\bA,\bb)$ is convex are of interest. The next theorem provides a verifiable condition which ensures the convexity of the constraint and proves the convexity of common eigenvalue constraints such as $\mathcal{S}^{n \times n}_{+}$ and condition number constraints \cite{tanaka2014positive,won2013condition}. 

\begin{theorem}\label{thm:convex_constraint}
If each row of $\bA$ is an element of $\R^n_{\geq} := \left\{ \bx \in \R^n \; | \; x_1 \geq \hdots \geq x_n \right\}$, $\mathcal{S}_{\blambda}(\bA,\bb)$ is convex. 
\end{theorem}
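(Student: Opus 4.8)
The plan is to peel the problem apart into individual constraints and reduce the convexity of each to a single well-known fact about sums of the largest eigenvalues. Since $\mathcal{S}_{\blambda}(\bA,\bb) = \bigcap_{i=1}^m \{\bX \in \Symn : \bm{a}_i^\top \blambda(\bX) \leq b_i\}$, where $\bm{a}_i^\top$ denotes the $i$-th row of $\bA$, and since an arbitrary intersection of convex sets is convex, it suffices to fix a single vector $\bm{a} = (a_1,\dots,a_n)^\top \in \R^n_{\geq}$ and prove that the function $g(\bX) := \bm{a}^\top \blambda(\bX) = \sum_{j=1}^n a_j \lambda_j(\bX)$ is convex on $\Symn$; each constraint set is then the sublevel set $\{\bX : g(\bX) \leq b_i\}$ of a convex function and is therefore convex.

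The heart of the argument is a summation-by-parts that exposes the ordering hypothesis on $\bm{a}$. Writing $s_k(\bX) := \sum_{j=1}^k \lambda_j(\bX)$ for the sum of the $k$ largest eigenvalues, with the convention $s_0 \equiv 0$, and substituting $\lambda_j(\bX) = s_j(\bX) - s_{j-1}(\bX)$, Abel summation gives
\[
g(\bX) \;=\; \sum_{j=1}^n a_j\big(s_j(\bX) - s_{j-1}(\bX)\big) \;=\; a_n\, s_n(\bX) + \sum_{j=1}^{n-1} (a_j - a_{j+1})\, s_j(\bX).
\]
Because $\bm{a} \in \R^n_{\geq}$, every weight $a_j - a_{j+1}$ is nonnegative, and $s_n(\bX) = \Tr(\bX)$ is linear in $\bX$. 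Thus $g$ is a linear term plus a nonnegative combination of the functions $s_1,\dots,s_{n-1}$.

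To finish I would invoke Ky Fan's maximum principle,
\[
s_k(\bX) = \sum_{j=1}^k \lambda_j(\bX) = \max_{\bU \in \R^{n\times k},\, \bU^\top \bU = \bm{I}_k} \Tr\big(\bU^\top \bX \bU\big),
\]
which presents each $s_k$ as a pointwise maximum of linear functions of $\bX$, hence as a convex function; a nonnegative combination of convex functions together with a linear term is convex, so $g$ is convex and we are done. The only substantive ingredient is this convexity of the partial sums $s_k$, so the main obstacle is recognizing that the monotone structure of the rows is exactly what lets one rewrite $g$ as such a combination — everything else is bookkeeping. The one point worth flagging in the write-up is that the sign of the last weight $a_n$ is immaterial, precisely because $s_n$ is linear rather than merely convex, which is why no assumption beyond the ordering $a_1 \geq \cdots \geq a_n$ of each row is needed.
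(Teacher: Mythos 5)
Your proof is correct, and while it shares the paper's overall skeleton — reduce to showing each $g(\bX)=\bm{a}^\top\blambda(\bX)$ is convex, then take the intersection of sublevel sets — the way you establish that convexity is genuinely cleaner than the route the paper takes. The paper (Proposition~\ref{thm:f_convex} in the Appendix) splits the coefficient vector as $\bm{a}=\by+\bz$ with $y_1\geq\hdots\geq y_n\geq 0$ and $z_n\leq\hdots\leq z_1\leq 0$, and needs \emph{two} facts: convexity of the top-$k$ eigenvalue sums and concavity of the bottom-$p$ sums. Your Abel summation $g(\bX)=a_n\Tr(\bX)+\sum_{j=1}^{n-1}(a_j-a_{j+1})s_j(\bX)$ needs only one fact — the Ky Fan variational characterization of $s_k$ as a pointwise maximum of linear functionals — because the linearity of $s_n=\Tr$ absorbs the possibly negative sign of $a_n$, exactly as you flag. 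Your version also makes the role of the ordering hypothesis completely transparent (it is precisely what makes the weights $a_j-a_{j+1}$ nonnegative), whereas the paper's decomposition step is left somewhat implicit behind ``simple operations which preserve convexity.'' In short: same theorem-level strategy, but a more self-contained and more economical proof of the key lemma.
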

\begin{proof}
Assume each row of $\bA$ is an element of $\R^n_{\geq}$. Define $f_i(\bX):= \sum_{j=1}^n A_{ij} \lambda_j(\bX)$ for $i\in [m]$. Since each $f_i$ is convex (see Section \ref{sec:appen_1}), the level sets of each $f_i$ are convex.   
The result then follows because $\mathcal{S}_{\blambda}(\bA,\bb)$ is the intersection of the level sets of the $f_i$'s. 
\end{proof}

\section{General Optimality Conditions}\label{sec:opt_conditions}
The non-smoothness of the eigenvalues necessitates a discussion of concepts from non-smooth analysis. Consider an open set $\Omega$ of $\R^n$ and $f:\Omega \rightarrow \R$ locally Lipschitz on $\Omega$, i.e., given $\bx \in \Omega$ there exists $L_{\bx}$ and $\delta_{\bx} > 0$ such that if $\|\by - \bx \| \leq \delta_{\bx}$ then 
$|f(\bx) - f(\by) | \leq L_{\bx} \| \bx - \by\|$. The ({\it radial}) {\it directional derivative} of $f$ at $\bx$ in the direction of $\bd \in \R^n$ is
\[
f'(\bx; \bd) := \lim_{t \rightarrow 0_+} \frac{f(\bx + t \bd) - f(\bx)}{t},
\]
when the limit exists. A relaxed version of the directional derivative is the {\it Clarke directional derivative}.
The {\it Clarke directional derivative} of $f$ at $\bx$ in the direction of $\bd$ is
\[
f^C(\bx; \bd):= \limsup_{(t,\by) \rightarrow (0_+, \bx)} \frac{f(\by + t \bd) - f(\by)}{t}.
\]
Unlike $\bd \mapsto f'(\bx;\bd)$, $\bd\mapsto f^C(\bx;\bd)$ is guaranteed to be a finite convex function for all $\bx \in \Omega$ (see Section 5.1 in \cite{penot2013calculus}). The {\it Clarke subdifferential} of $f$ at $\bx$ is
\[
\partial_C f(\bx) := \left\{ \bm{s} \in \R^n \; | \; \langle \bm{s}, \bm{d} \rangle \leq f^C(\bx; \bd), \; \forall \bd \in \R^n \right\}. 
\]
The Clarke subdifferential is a non-empty, compact and convex set, and these properties make it a frequently utilized generalization of differentiation for non-smooth functions. The definitions presented here for Clarke subdifferentials can be found in Section 2 of \cite{hiriart1999clarke}. It is often necessary to assume a function is {\it regular} at a point for certain results to hold. Different definitions of regularity exist, but in this section we say $f$ is {\it regular} at $\bx \in \Omega$ if 
\[
f^C(\bx;\bd) = \liminf_{(t,\bm{v}) \rightarrow (0_+, \bd)} \frac{f(\bx+t\bm{v}) - f(\bx)}{t}
\]
for all $\bd \in \R^n$ (see Definition 5.46 and Proposition 4.3 in \cite{penot2013calculus}).
This notion of regularity focuses on the equality of two slightly different generalizations of the directional derivative of $f$. One deals with perturbing the point $\bx$ while the other perturbs the direction $\bd$. Convex functions are regular at all points in their domain and if $f$ is continuously differentiable at $\bx$ then it is also regular at  $\bx$. 
A thorough discourse on Clarke subdifferentials and regularity is presented in \cite{penot2013calculus}. 
The interested reader should consult this text and the references therein for further details. 
We now present general necessary conditions for optimal solutions to \eqref{eqn:SCO-Eig}. 

\begin{theorem}\label{thm:gen_opt_condition}
Assume $F:\Symn \rightarrow \R$ is continuously differentiable and $g_i(\bX):= \bm{a}_i^\top \blambda(\bX) - b_i$ for $i\in [p]$. Let $\bX^*$ be a local minimizer of $\min\left\{F(\bX) \; | \; g_i(\bX) \leq 0, \; i\in [p], \; \bX \in \Symn\right\}$ and $\mathcal{I}(\bX^*):= \left\{i\in [p] : g_i(\bX^*) = 0\right\}$. Suppose
\[
\bm{t} \in \R^p_+, \; t_i = 0 \; \forall i \in [p] \setminus \mathcal{I}(\bX^*), \; 0 \in \partial_C g_1(\bX^*) + \hdots + \partial_C g_p(\bX^*) \implies \bm{t} = \bm{0}
\]
and each $g_i$ is regular at $\bX^*$, then there exist multipliers $\mu \in \R^{p}_+$ such that, $\mu_i g_i(\bX^*) = 0$ for all $i$ and, 
\begin{multline}\label{eqn:opt_eq_1}
 0 \in  \nabla F(\bX^*) + \mu_1 \sum_{i=1}^{n} {a}_{1i} \text{conv}\left\{ \bm{v} \bm{v}^\top \; | \; \bm{v} \in \mathbb{E}_i(\bX^*), \; \|\bm{v}\|=1\right\} \\  + \hdots + \mu_p \sum_{i=1}^{n} {a}_{pi} \text{conv}\left\{ \bm{v} \bm{v}^\top \; | \; \bm{v} \in \mathbb{E}_i(\bX^*), \; \|\bm{v}\|=1\right\} 
\end{multline}
where $\mathbb{E}_i(\bX^*)$ denotes the eigenspace of $\bX^*$ corresponding to the $i$-th largest eigenvalue of $\bX^*$ and $\text{conv}(S)$ is the convex hull of a given set $S$. 
\end{theorem}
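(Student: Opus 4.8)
The plan is to read (\ref{eqn:opt_eq_1}) as a nonsmooth Karush–Kuhn–Tucker condition and to assemble it from two ingredients: a Clarke multiplier rule for the locally Lipschitz program, and an explicit formula for the Clarke subdifferential of each eigenvalue map $\bX \mapsto \lambda_j(\bX)$. First I would record the standing facts. Each $\lambda_j$, hence each $g_i = \sum_{j=1}^n a_{ij}\lambda_j - b_i$, is globally Lipschitz on $\Symn$ (by Weyl's inequality), so the $g_i$ are locally Lipschitz and $\partial_C g_i(\bX^*)$ is well defined; and since $F$ is $C^1$, $\partial_C F(\bX^*) = \{\nabla F(\bX^*)\}$. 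The displayed hypothesis — that the only $\bm{t} \in \R^p_+$ supported on $\mathcal{I}(\bX^*)$ with $0 \in \sum_i t_i\,\partial_C g_i(\bX^*)$ is $\bm{t} = \bm{0}$ — is precisely the Clarke constraint qualification ruling out nonzero abnormal multipliers.

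With these in place, I would invoke the nonsmooth Lagrange multiplier rule (Clarke's theorem in the form given in the cited calculus text \cite{penot2013calculus}): under the above constraint qualification and the regularity of the $g_i$ at $\bX^*$, a local minimizer admits multipliers $\mu \in \R^p_+$ satisfying the complementarity conditions $\mu_i g_i(\bX^*) = 0$ for all $i$ together with the inclusion $0 \in \partial_C F(\bX^*) + \sum_{i=1}^p \mu_i\,\partial_C g_i(\bX^*)$. Substituting $\partial_C F(\bX^*) = \{\nabla F(\bX^*)\}$ already gives the skeleton of (\ref{eqn:opt_eq_1}); what remains is to replace each $\partial_C g_i(\bX^*)$ by the eigenspace expression.

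For that I would compute $\partial_C g_i(\bX^*)$. The scalar-multiple and sum rules of the Clarke calculus yield the inclusion $\partial_C g_i(\bX^*) \subseteq \sum_{j=1}^n a_{ij}\,\partial_C \lambda_j(\bX^*)$, and this inclusion is all the necessary condition requires (the regularity hypothesis additionally promotes it to an equality, so the stated condition is as tight as possible). The central identity is $\partial_C \lambda_j(\bX^*) = \text{conv}\{\bm{v}\bm{v}^\top : \bm{v} \in \mathbb{E}_j(\bX^*),\ \|\bm{v}\|=1\}$, which I would obtain from the limiting-gradient (Rademacher) description of the Clarke subdifferential: $\lambda_j$ is differentiable exactly where its value is a simple eigenvalue, with gradient $\bm{v}_j\bm{v}_j^\top$ for the corresponding unit eigenvector, and as $\bX \to \bX^*$ through such matrices the $j$-th eigenvector sweeps out precisely the unit sphere of the eigenspace $\mathbb{E}_j(\bX^*)$; taking the convex hull of these limiting gradients gives the claimed set. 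Since this is exactly the eigenvalue sensitivity result of \cite{hiriart1999clarke}, I would cite it rather than re-derive the perturbation analysis. Feeding these expressions into the multiplier inclusion then produces (\ref{eqn:opt_eq_1}) along with the sign and complementarity conditions on $\mu$.

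I expect the main obstacle to be the eigenvalue subdifferential computation at a point of multiplicity, where $\lambda_j$ is genuinely nonsmooth and, for intermediate $j$, neither convex nor concave. The delicate points are verifying that the limiting gradients fill out the \emph{entire} eigenspace $\mathbb{E}_j(\bX^*)$ rather than a proper subset, and that aggregating the $\partial_C \lambda_j$ against the possibly mixed-sign coefficients $a_{ij}$ is legitimate — both of which are controlled by the regularity hypothesis and are the reasons it is assumed. Once the eigenvalue formula is in hand, the remainder is a bookkeeping substitution into the multiplier rule.
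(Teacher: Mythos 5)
Your proposal is correct and follows essentially the same route as the paper: invoke the nonsmooth multiplier rule from \cite{penot2013calculus} (the paper uses its Corollary 5.54) under the stated constraint qualification and regularity, use $\partial_C F(\bX^*)=\{\nabla F(\bX^*)\}$ and the global Lipschitz continuity of $\blambda(\cdot)$, and then substitute the Hiriart-Urruty--Lewis formula $\partial_C \lambda_j(\bX^*)=\conv\{\bm{v}\bm{v}^\top : \bm{v}\in\mathbb{E}_j(\bX^*),\ \|\bm{v}\|=1\}$ from \cite{hiriart1999clarke} via the sum and scalar-multiple rules. Your observation that the inclusion $\partial_C g_i(\bX^*)\subseteq\sum_j a_{ij}\partial_C\lambda_j(\bX^*)$ already suffices for the necessary condition, with regularity only upgrading it to equality, is a slightly sharper accounting than the paper's but does not change the argument.
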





\begin{proof}
Using the framework of Corollary 5.54 in \cite{penot2013calculus}, let $f = F$ and $g_i$ be as defined. Since $\blambda(\cdot)$ is globally Lipschitz continuous, $g_i$ is locally Lipschitz for all $\bX \in \Symn$. By the assumption $F$ is continuously differentiable, it follows $\partial_C F(\bX) = \{\nabla F(\bX)\}$ for all $\bX$. The proof is completed by computing the Clarke subdifferential of the $g_i$'s. Since $g_i$ is regular at $\bX$,
\begin{align}
\partial_C g_i(\bX) &= \sum_{j=1}^{n} \partial_C \left( {a}_{ij} \blambda_j(\cdot)\right)(\bX) \nonumber \\
&= \sum_{j=1}^{n} {a}_{ij} \partial_C \blambda_j(\bX) \nonumber \\
&=\sum_{j=1}^{n} {a}_{ij} \text{conv}\left\{ \bm{v} \bm{v}^\top \;| \; \bm{v} \in \mathbb{E}_j(\bX), \; \|\bm{v}\|=1 \right\} 
\end{align}
where the first and second equalities follow by Theorem 5.51 and Proposition 5.9 in \cite{penot2013calculus} respectively. The last equality is due to Theorem 5.3 in \cite{hiriart1999clarke}. 
\end{proof}

Theorem \ref{thm:gen_opt_condition} provides general optimality conditions for
\eqref{eqn:SCO-Eig}. Note,  
if $g_i(\bX) = \bm{a}_i^\top \lambda(\bX) - b_i$ for $i\in [m]$ where $\bm{a}_i$ is the $i$-th row of $\bA$, then $\mathcal{S}_{\lambda}(\bA,\bb) = \left\{ \bX \in \Symn \; | \; g_i(\bX) \leq 0, \; i\in [m]\right\}$. 
The assumptions required for Theorem \ref{thm:gen_opt_condition} to hold are substantial including: a constraint qualification, regularity and non-trivial convex hulls of eigenspaces; however, if the local minimizer has unique eigenvalues, the necessary conditions simplify immensely.  
%
\begin{theorem}\label{thm:easy_NC}
Assume $F:\Symn \rightarrow \R$ is continuously differentiable. Define $g_i(\bX):= \bm{a}_i^\top \blambda(\bX) - b_i$ for $i\in [p]$. Let $\bX^*$ be a local minimizer of $\min_{\bX \in \Symn}\left\{F(\bX) \; | \; g_i(\bX) \leq 0, \; i\in [p]\right\}$ with no repeated eigenvalues. If $\{ \bm{a}_i \; | \; i \in \mathcal{I}(\bX^*)\}$ is a linearly independent set, then there exist multipliers $\mu \in \R^p_+$ such that, 
\[
\nabla F(\bX^*) + \sum_{i=1}^{p} \mu_i^* \bm{V}^* \text{Diag}(\bm{a}_i) (\bm{V}^*)^\top = \bm{0},
\]
and $\mu_i^* g_i(\bX^*) = 0$ for all $i$ where $\bX^* = \bm{V}^* \text{Diag}(\blambda(\bX^*)) (\bm{V}^*)^\top$. 
\end{theorem}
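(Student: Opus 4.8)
The plan is to exploit the crucial simplification afforded by the hypothesis that $\bX^*$ has no repeated eigenvalues, which turns the nonsmooth setting of Theorem \ref{thm:gen_opt_condition} into an ordinary smooth nonlinear program near $\bX^*$. Write $\bX^* = \bm{V}^* \text{Diag}(\blambda(\bX^*)) (\bm{V}^*)^\top$ and let $\bm{v}_1^*,\dots,\bm{v}_n^*$ be the columns of $\bm{V}^*$, i.e. the associated unit eigenvectors. Since the eigenvalues are distinct, each eigenspace $\mathbb{E}_j(\bX^*)$ is one-dimensional, so the convex hull $\text{conv}\{\bm{v}\bm{v}^\top : \bm{v}\in\mathbb{E}_j(\bX^*),\ \|\bm{v}\|=1\}$ collapses to the single matrix $\bm{v}_j^*(\bm{v}_j^*)^\top$, which is moreover independent of the sign of $\bm{v}_j^*$. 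By the Clarke subdifferential computation carried out in the proof of Theorem \ref{thm:gen_opt_condition}, this forces $\partial_C g_i(\bX^*)$ to be the singleton $\{\sum_{j=1}^n a_{ij}\,\bm{v}_j^*(\bm{v}_j^*)^\top\}$.

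Next I would upgrade this single-valued Clarke subdifferential to genuine continuous differentiability. Because each eigenvalue is simple at $\bX^*$, classical perturbation theory guarantees that $\blambda_j(\cdot)$ is $C^1$ (indeed real-analytic) in a neighborhood of $\bX^*$ with $\nabla\blambda_j(\bX^*) = \bm{v}_j^*(\bm{v}_j^*)^\top$; hence each $g_i$ is $C^1$ near $\bX^*$ with gradient $\nabla g_i(\bX^*) = \sum_{j=1}^n a_{ij}\,\bm{v}_j^*(\bm{v}_j^*)^\top = \bm{V}^*\text{Diag}(\bm{a}_i)(\bm{V}^*)^\top$. In particular each $g_i$ is regular at $\bX^*$, so the regularity hypothesis of Theorem \ref{thm:gen_opt_condition} is satisfied automatically in this regime.

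The remaining ingredient is the constraint qualification, which I would reduce to a short linear-algebra argument. Consider the linear map $T:\bm{a}\mapsto \bm{V}^*\text{Diag}(\bm{a})(\bm{V}^*)^\top$ from $\R^n$ into $\Symn$. It is injective: if $\bm{V}^*\text{Diag}(\bm{a})(\bm{V}^*)^\top = \bm{0}$, then conjugating by the orthogonal matrix $\bm{V}^*$ gives $\text{Diag}(\bm{a})=\bm{0}$, whence $\bm{a}=\bm{0}$. Consequently the assumed linear independence of $\{\bm{a}_i : i\in\mathcal{I}(\bX^*)\}$ transfers to linear independence of the active gradients $\{\nabla g_i(\bX^*) : i\in\mathcal{I}(\bX^*)\} = \{T(\bm{a}_i)\}$; that is, the linear independence constraint qualification holds at $\bX^*$, which in turn implies the constraint qualification hypothesis of Theorem \ref{thm:gen_opt_condition}.

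With regularity and the constraint qualification verified, I would invoke Theorem \ref{thm:gen_opt_condition} (equivalently, the classical smooth KKT theorem on the Euclidean space $\Symn$ under the Frobenius inner product) to obtain multipliers $\mu^*\in\R^p_+$ with $\mu_i^* g_i(\bX^*)=0$ for all $i$ and $\bm{0}\in\nabla F(\bX^*) + \sum_{i=1}^p \mu_i^*\,\partial_C g_i(\bX^*)$. Substituting the singleton gradients $\partial_C g_i(\bX^*)=\{\bm{V}^*\text{Diag}(\bm{a}_i)(\bm{V}^*)^\top\}$ then yields exactly the claimed stationarity equation. I expect the main obstacle to be the second step: rigorously justifying that simplicity of the eigenvalues produces $C^1$ eigenvalue maps with the stated gradients, and that this promotes the single-valued Clarke subdifferential to an honest derivative, so that the classical smooth KKT machinery applies and the convex hulls appearing in Theorem \ref{thm:gen_opt_condition} genuinely degenerate to the rank-one matrices $\bm{v}_j^*(\bm{v}_j^*)^\top$.
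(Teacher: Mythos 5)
Your proposal is correct and follows essentially the same route as the paper: simplicity of the eigenvalues gives $C^1$ eigenvalue maps (hence regularity and singleton Clarke subdifferentials $\{\bm{V}^*\Diag(\bm{a}_i)(\bm{V}^*)^\top\}$), the injectivity of $\bm{a}\mapsto \bm{V}^*\Diag(\bm{a})(\bm{V}^*)^\top$ converts linear independence of the active $\bm{a}_i$ into the required constraint qualification, and the general nonsmooth KKT result (Corollary 5.54 of Penot, underlying Theorem~\ref{thm:gen_opt_condition}) then delivers the multipliers. The only cosmetic difference is that you phrase the constraint qualification as LICQ for the gradients while the paper verifies the positive-multiplier condition directly, but these coincide here.
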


\begin{proof}
Since $\bX^*$ has no repeated eigenvalues,
$g_i$ is regular at $\bX^*$. The uniqueness of the eigenvalues of $\bX^*$ imply there is a neighborhood about $\bX^*$ such that $\lambda_i(\cdot)$ is continuously differentiable with a continuously varying associated eigenvector (Theorem 3.1.1 of \cite{Ortega_Num_Analysis}). 
Thus, each $g_i$ is continuously differentiable at $\bX^*$ and therefore regular at $\bX^*$.
The calculation of the Clarke subdifferential of $g_i$ follows from the fact each eigenspace of $\bX^*$ contains a single element, 
$\partial_C g_i(\bX^*) = \left\{ \bm{V}^* \text{Diag}(\bm{a}_i) (\bm{V^*})^\top\right\}$ where $\bX^* = \bm{V}^* \text{Diag}(\blambda(\bX^*))(\bm{V^*})^\top$ is the eigendecomposition of $\bX^*$. 
Hence, 
\begin{equation}\label{eqn:help_conqual}
t_1 \partial_C g_1(\bX) + \hdots + t_p \partial_C g_p(\bX) = \sum_{i=1}^{p} t_i \bm{V}^* \text{Diag}(\bm{a}_i) (\bm{V^*})^\top = \bm{V}^* \text{Diag}(\sum_{i=1}^{p} t_i \bm{a}_i) (\bm{V^*})^\top. 
\end{equation}
To complete the proof, we prove the constraint qualification condition for Corollary 5.54 in \cite{penot2013calculus} holds given our assumption on $\{ \bm{a}_i \; | \; i \in \mathcal{I}(\bX^*)\}$. Without loss of generality, we assume the first $r$ constraints are active. Then the constraint qualification will hold provided 
$
\bm{0} = t_1 \partial_C g_1(\bX) + \hdots + t_r \partial_C g_r(\bX) \;
$
if and only if $t_1=\hdots=t_r=0$. From \eqref{eqn:help_conqual}, we see this is equivalent to 
$
\bm{0} = \text{Diag}(t_1 \bm{a}_1 + \hdots + t_r \bm{a}_r)\;
$
if and only if $t_1 = \hdots = t_r=0$ which follows from the assumptions. 
\end{proof}

Comparing the necessary conditions with and without repeated eigenvalues inclines us to prefer the latter. 
The eigenspace dependence, the required regularity, and the constraint qualification limit the utility of Theorem  \ref{thm:gen_opt_condition} while such difficulties dissipate when the local minimizer has no repeated eigenvalues. In some cases, the constraint set ensures uniqueness; however, many constraints have feasible matrices with repeated eigenvalues. This seems unavoidable, but we now prove it is possible to approximate the original model and remove all feasible solutions with repeated eigenvalues. Letting $\epsilon > 0 $, we define the approximated \eqref{eqn:SCO-Eig} model, 
\begin{align}\label{eqn:approx_SCO-Eig}\tag{SCO-Eig-$\epsilon$}
\min&\; F(\bX) \nonumber \\
\text{s.t.}&\; \bA \lambda(\bX) \leq \bb \nonumber \\
           &\;\lambda_{i+1}(\bX) \leq \lambda_{i}(\bX) - \epsilon, \; i=1,\hdots, n-1 \nonumber \\ 
           &\; \bX \in \Symn. \nonumber 
\end{align}
The next result shows that 
it is possible to bound the gap in the optimal values of the two models as a function of $\epsilon$
given certain assumptions.
\begin{theorem}\label{thm:M_eps_AND_M}
Assume $F$ is Lipschitz continuous with constant $L>0$, the interior of $\mathcal{S}_{\blambda}(\bA,\bb)$ is non-empty, and $[\bA^\top | \bm{D}_n^\top] \in \R^{n \times (m+n-1)}$ has full rank. Let $\bX^*$ be a global minimizer of \eqref{eqn:SCO-Eig}. Then there exists $\epsilon_0 > 0$ such the constraint set of \eqref{eqn:approx_SCO-Eig} is non-empty for all $\epsilon \in [0,\epsilon_0]$, and, letting $\bX_{\epsilon}^*$ be a global minimizer of \eqref{eqn:approx_SCO-Eig}, we have for all $\epsilon \in [0,\epsilon_0]$  
\[
| F(\bX^*) - F(\bX_{\epsilon}^*)| \leq L\cdot\chi([\bA^\top | \bm{D}_n^\top])\cdot (\sqrt{n-1})\epsilon,
\]
where $\chi(\bm{Z}) := \left\{ \|\bm{Z}_\mathcal{I}^{-1}\|_2 \; | \; \mathcal{I} \subset [m+n-1],\; |\mathcal{I}|=n,\; \bm{Z}_{\mathcal{I}} \text{ non-singular} \right\}$\footnote{This constant was first presented by Dikin \cite{dikin1967iterative}. Equivalent definitions, as the one stated, were  proven in \cite{vavasis1996primal,zhang2000global}.} with $\bm{Z}_{\mathcal{I}}$ being the matrix formed from the columns of $\bm{Z}$ whose indices belong in $\mathcal{I}$.
\end{theorem}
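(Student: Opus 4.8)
The plan is to exploit three facts: the feasible set of \eqref{eqn:approx_SCO-Eig} sits inside that of \eqref{eqn:SCO-Eig}, so the $\epsilon$-model can only have a larger optimal value; the extra constraints merely tighten the spectral-gap inequalities by $\epsilon$; and $F$ is Lipschitz, so a small spectral perturbation of $\bX^*$ moves the objective by at most order $\epsilon$. Throughout I work in eigenvalue space, writing $P := \{\blambda\in\R^n : \bA\blambda\leq\bb,\ \bD_n\blambda\leq\bm{0}\}$ for the ordered feasible polyhedron and $P_\epsilon := \{\blambda\in\R^n : \bA\blambda\leq\bb,\ \bD_n\blambda\leq-\epsilon\bm{1}\}$ for its $\epsilon$-tightening, where $\bm{1}\in\R^{n-1}$ is the all-ones vector; a matrix $\bX$ is feasible for \eqref{eqn:approx_SCO-Eig} exactly when $\blambda(\bX)\in P_\epsilon$.

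First I would settle non-emptiness and produce $\epsilon_0$. Since the matrices with $n$ distinct eigenvalues form an open dense subset of $\Symn$, the nonempty open set $\text{int}\,\mathcal{S}_{\blambda}(\bA,\bb)$ contains some $\bX_0$ with distinct eigenvalues; near such a matrix the eigenvalue map $\bX\mapsto\blambda(\bX)$ is a smooth submersion (its derivative hits $\bm{e}_j$ along $\bm{v}_j\bm{v}_j^\top$), so $\hat\blambda := \blambda(\bX_0)$ lies in the interior of $P$, i.e. $\bA\hat\blambda<\bb$ and $\hat\blambda_1>\cdots>\hat\blambda_n$. Setting $\epsilon_0 := \min_i(\hat\blambda_i - \hat\blambda_{i+1})>0$, for every $\epsilon\in[0,\epsilon_0]$ the matrix $\Diag(\hat\blambda)$ is feasible for \eqref{eqn:approx_SCO-Eig}, which is therefore non-empty.

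The quantitative core is a Hoffman/Dikin distance bound. With $\blambda^* := \blambda(\bX^*)\in P$, I would bound the distance from $\blambda^*$ to $P_\epsilon$. Writing $\bm{M} := \begin{pmatrix}\bA\\\bD_n\end{pmatrix}$, which has full column rank by hypothesis and satisfies $\bm{M}^\top = [\bA^\top\,|\,\bD_n^\top]$, the Hoffman error bound attached to the Dikin constant (see the references in the theorem) gives $\text{dist}_2(\blambda^*, P_\epsilon) \leq \chi([\bA^\top|\bD_n^\top])\,\big\|(\bm{M}\blambda^* - \bd_\epsilon)_+\big\|_2$, where $\bd_\epsilon = (\bb^\top, -\epsilon\bm{1}^\top)^\top$. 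The first block of the residual vanishes since $\bA\blambda^*\leq\bb$, while in the second block each entry equals $(\lambda^*_{i+1}-\lambda^*_i+\epsilon)_+\leq\epsilon$ because $\blambda^*$ is ordered; hence $\|(\bm{M}\blambda^*-\bd_\epsilon)_+\|_2\leq\sqrt{n-1}\,\epsilon$, so the projection $\blambda_\epsilon := \Proj_{P_\epsilon}(\blambda^*)$ obeys $\|\blambda_\epsilon-\blambda^*\|_2\leq\chi([\bA^\top|\bD_n^\top])\sqrt{n-1}\,\epsilon$.

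Finally I would lift back to matrix space and invoke Lipschitzness. Taking an eigendecomposition $\bX^* = \bV^*\Diag(\blambda^*)(\bV^*)^\top$ and setting $\bX_\epsilon := \bV^*\Diag(\blambda_\epsilon)(\bV^*)^\top$, the matrix $\bX_\epsilon$ is symmetric with spectrum $\blambda_\epsilon\in P_\epsilon$, hence feasible for \eqref{eqn:approx_SCO-Eig}, and orthogonal invariance of the Frobenius norm gives $\|\bX_\epsilon - \bX^*\|_F = \|\blambda_\epsilon-\blambda^*\|_2$. Since $P_\epsilon\subseteq P$ forces $F(\bX^*)\leq F(\bX_\epsilon^*)$, combining optimality of $\bX_\epsilon^*$ with the Lipschitz estimate yields
\[
0 \le F(\bX_\epsilon^*) - F(\bX^*) \le F(\bX_\epsilon) - F(\bX^*) \le L\|\bX_\epsilon - \bX^*\|_F \le L\,\chi([\bA^\top|\bD_n^\top])\sqrt{n-1}\,\epsilon,
\]
which is the claim. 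The main obstacle is the quantitative step: justifying the Hoffman inequality with the constant being exactly the Dikin quantity $\chi([\bA^\top|\bD_n^\top])$ of the full constraint matrix (rather than an abstract Hoffman constant), which is precisely where the full-rank hypothesis is used; the softer point needing care is the passage from a nonempty matrix-space interior to a strictly order-separated, $\bA$-feasible spectrum $\hat\blambda$.
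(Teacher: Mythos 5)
Your proposal is correct and follows essentially the same route as the paper: reduce to eigenvalue space, bound $\mathrm{dist}(\blambda(\bX^*),P_\epsilon)$ via the Hoffman error bound with the Dikin constant $\chi([\bA^\top|\bm{D}_n^\top])$ (the paper cites Theorem 3.6 of Zhang's global error bound paper for exactly this), observe the residual is at most $\sqrt{n-1}\,\epsilon$ because $\blambda(\bX^*)$ already satisfies the untightened system, and lift back through the eigendecomposition of $\bX^*$ using Lipschitzness and the nesting of feasible sets. Your treatment of the non-emptiness step (via density of distinct-eigenvalue matrices and openness of the eigenvalue map) and of the inequality chain $0\le F(\bX_\epsilon^*)-F(\bX^*)\le F(\bX_\epsilon)-F(\bX^*)$ is, if anything, slightly more careful than the paper's.
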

\begin{proof}
By our assumptions, there exists $\bm{\bar{X}}\in \Symn$ such that $\bA \blambda(\bm{\bar{X}}) < \bb$ and $\bm{D}_n\blambda(\bm{\bar{X}})<\bm{0}$. Therefore, 
there exists ${\epsilon_0}>0$ such that  
$\bA \blambda(\bm{\bar{X}}) \leq \bb$ and $\bm{D}_n\blambda(\bm{\bar{X}})\leq -\epsilon_0 \bm{e}$ which implies the constraint set of \eqref{eqn:approx_SCO-Eig} is non-empty for all $\epsilon \in [0,\epsilon_0]$. 

Let $\bX^* = \bm{V}^* \text{Diag}(\blambda^*)(\bm{V}^*)^\top$ be a spectral decomposition of $\bX^*$ and $P_{\mathcal{S}_\epsilon}(\blambda^*)$ be the projection of $\blambda^*$ onto $\mathcal{S}_\epsilon:=\{\bx \in \R^n \; | \; \bA\bx \leq \bb, \; \bm{D}_n\bx \leq -\epsilon \bm{e}\}$. By the Lipschitz continuity of $F$, for any $\epsilon \in [0,\epsilon_0]$
\begin{align}\label{eqn:diff_F}
| F(\bX^*) - F(\bX_{\epsilon}^*)| &= | F(\bm{V}^* \text{Diag}(\blambda^*)(\bm{V}^*)^\top) - F(\bX_\epsilon^*)| \nonumber\\
                                &\leq | F(\bm{V}^* \text{Diag}(\blambda^*)(\bm{V}^*)^\top) - F(\bm{V}^* \text{Diag}(P_{S_\epsilon}(\blambda^*))(\bm{V}^*)^\top)| \nonumber\\
                                            &\leq L \|\bm{V}^*\text{Diag}(\blambda^*)(\bm{V}^*)^\top - \bm{V}^*\text{Diag}(P_{\mathcal{S}_\epsilon}(\blambda^*))(\bm{V}^*)^\top  \|_F, \nonumber \\ 
                                            &=L\| \blambda^* - P_{\mathcal{S}_\epsilon}(\blambda^*)\|_2. 
\end{align}
Using Hoffman's error bound for linear systems, we bound the distance between $\blambda^*$ and $ P_{\mathcal{S}_\epsilon}(\blambda^*)$. By Theorem 3.6 in \cite{zhang2000global}, for all $\bm{z} \in \R^n$ 
\begin{equation}\label{eqn:hoff_bound}
\| \bm{z} - P_{\mathcal{S}_\epsilon}(\bm{z})\|_2 \leq \chi([\bA^\top|\bm{D}_n^\top])\cdot \bigg\| \left( [\bA^\top|\bm{D}_n^\top]^\top \bm{z} - [\bb^\top | -\epsilon \bm{e}^\top]^\top \right)_+ \bigg\|_2 
\end{equation}
where $(\bm{y})_+ := ((y_1)_+, \hdots, (y_n)_+)^\top$ for $\bm{y}\in \R^n$ with $(a)_+ := \max(0,a)$ for $a\in \R$ and for $\bm{Z} \in \R^{m \times n}$ with full rank 
\begin{equation}\label{eqn:chi_def}
\chi(\bm{Z}) := \left\{ \|\bm{Z}_\mathcal{I}^{-1}\|_2 \; | \; |\mathcal{I}|=m,\; \bm{Z}_{\mathcal{I}} \text{ is non-singular} \right\},
\end{equation}
where $\bm{Z}_{\mathcal{I}}$ denotes the submatrix matrix of $\bm{Z}$ composed of the columns of $\bm{Z}$ in the index set $\mathcal{I} \subset [n]$ \cite{zhang2000global}. Since $\blambda^*$ is contained in $\{\bx \in \R^n \; | \; \bA\bx \leq \bb, \; \bm{D}_n\bx \leq \bm{0}\}$, 
\begin{equation}\label{eqn:bound_lambda}
\bigg\| \left( [\bA^\top|\bm{D}_n^\top]^\top \blambda^* - [\bb^\top | -\epsilon \bm{e}^\top]^\top \right)_+ \bigg\|_2  =  \| (\bm{D}_n \bm{\lambda}^* + \epsilon \bm{e})_+\|_2 \leq \| \epsilon \bm{e}\|_2 = \epsilon \sqrt{n-1}.  
\end{equation}
Combining \eqref{eqn:diff_F}, \eqref{eqn:hoff_bound} and \eqref{eqn:bound_lambda} concludes the argument. 
\end{proof}

\section{Solving SCO-Eig with Linear Objective Functions}\label{sec:lin_obj} 
The simplest, non-trivial objective function to consider for \eqref{eqn:SCO-Eig} is a linear objective function, 
\begin{align}\label{eqn:main_linear_model}
\min&\;  \langle \bm{C}, \bX\rangle \\
\text{s.t.}&\; \bA \blambda(\bX) \leq \bb \nonumber \\
           &\; {\color{black}\bX \in \Symn}. \nonumber 
\end{align}
As we saw in Section \ref{sec:constraint_set}, the constraint set $\mathcal{S}_{\lambda}(\bA,\bb)$ can be highly non-convex. Therefore, one may not expect to guarantee a global minimizer; however, Theorem \ref{thm:gen_lin_obj_thm} below shows a global minimizer is readily computable regardless of the constraint set. Moreover, 
the solution to \eqref{eqn:main_linear_model} comes down to performing a single spectral decomposition and solving a single linear program. 

\begin{theorem}\label{thm:gen_lin_obj_thm}
Let $\bm{C} \in \R^{n\times n}$ with $\frac{1}{2}(\bm{C} + \bm{C}^\top) = \bm{P} \bm{\Omega} \bm{P}^\top$ for $\bm{P} \in \mathcal{O}(n)$ and $\bm{\Omega}=\Diag([\omega_{1}, \omega_{2}, \hdots, \omega_{n}])$ with $\omega_1 \geq \omega_2 \geq \hdots \geq \omega_n$. Then a global minimizer of \eqref{eqn:main_linear_model} is given by 
\begin{equation}
\bX^* = \bm{P}\Diag([\lambda^*_n, \lambda^*_{n-1}, \hdots, \lambda^*_1])\bm{P}^\top
\end{equation}
where 
\begin{equation}\label{eqn:lambda_opt_linear_model}
\blambda^* \in \argmin\left\{  \sum_{i=1}^{n} \omega_{i} \lambda_{n+1-i} \; | \;  \bA \blambda \leq {\bb}, \; \bm{D}_n \blambda \leq \bm{0} \right\}.
\end{equation}
\end{theorem}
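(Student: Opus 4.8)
The plan is to reduce the matrix problem to the scalar linear program through von Neumann's trace inequality, and then verify that the proposed $\bX^*$ attains the resulting lower bound. First I would note that since $\bX$ is symmetric, the objective depends only on the symmetric part of $\bm{C}$: because $\Tr(\bm{C}^\top\bX) = \Tr(\bm{C}\bX)$ when $\bX = \bX^\top$, we have $\langle \bm{C}, \bX\rangle = \Tr(\bm{S}\bX)$ with $\bm{S} := \tfrac{1}{2}(\bm{C}+\bm{C}^\top) = \bm{P}\bm{\Omega}\bm{P}^\top$. Thus we may work with $\bm{S}$ throughout.

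The heart of the argument is a lower bound valid for every feasible $\bX$. By von Neumann's trace inequality for symmetric matrices, if $\bm{S}$ has eigenvalues $\omega_1 \geq \cdots \geq \omega_n$ and $\bX$ has ordered eigenvalues $\lambda_1(\bX) \geq \cdots \geq \lambda_n(\bX)$, then
\[
\Tr(\bm{S}\bX) \geq \sum_{i=1}^n \omega_i\,\lambda_{n+1-i}(\bX).
\]
I would either cite this or give a short derivation: writing $\bX = \bm{Q}\bm{\Lambda}\bm{Q}^\top$ with $\bm{\Lambda}$ the descending diagonal of eigenvalues and setting $\bm{R} := \bm{P}^\top\bm{Q}$, one obtains $\Tr(\bm{S}\bX) = \sum_{i,j}\omega_i\lambda_j R_{ij}^2$, where $(R_{ij}^2)$ is doubly stochastic. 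The Birkhoff--von Neumann theorem then writes this as a convex combination of permutation pairings $\sum_i \omega_i\lambda_{\pi(i)}$, and the rearrangement inequality identifies the anti-sorted pairing $\pi(i)=n+1-i$ as the minimizer.

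Next I would connect the bound to the linear program. For any feasible $\bX$, the descending eigenvalue vector $\lambda(\bX)$ automatically satisfies $\bm{D}_n\lambda(\bX) \leq \bm{0}$, and feasibility gives $\bA\lambda(\bX) \leq \bb$; hence $\lambda(\bX)$ lies in the feasible set of \eqref{eqn:lambda_opt_linear_model}. Consequently
\[
\Tr(\bm{S}\bX) \;\geq\; \sum_{i=1}^n \omega_i\,\lambda_{n+1-i}(\bX) \;\geq\; \sum_{i=1}^n \omega_i\,\lambda^*_{n+1-i},
\]
the last step because $\blambda^*$ is an LP minimizer. This establishes the LP optimal value as a global lower bound for \eqref{eqn:main_linear_model}.

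Finally I would check that $\bX^*$ attains this bound. Since $\blambda^*$ satisfies $\bm{D}_n\blambda^* \leq \bm{0}$, its entries are already in descending order, so the ordered eigenvalues of $\bX^* = \bm{P}\Diag([\lambda^*_n,\ldots,\lambda^*_1])\bm{P}^\top$ are exactly $\blambda^*$; thus $\bA\lambda(\bX^*) = \bA\blambda^* \leq \bb$ and $\bX^*$ is feasible. Using $\bm{P}^\top\bm{P} = \bm{I}$,
\[
\Tr(\bm{S}\bX^*) = \Tr\!\big(\bm{\Omega}\,\Diag([\lambda^*_n,\ldots,\lambda^*_1])\big) = \sum_{i=1}^n \omega_i\,\lambda^*_{n+1-i},
\]
which equals the lower bound, so $\bX^*$ is a global minimizer. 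I expect the main obstacle to be establishing the trace inequality in the correct (minimizing) direction and carefully tracking the order reversal between the descending vector $\blambda^*$ and the ascending arrangement placed along the diagonal of $\bX^*$; the remaining steps are bookkeeping.
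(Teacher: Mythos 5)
Your proposal is correct and follows essentially the same route as the paper: reduce to the symmetric part of $\bm{C}$, decouple the eigenvector choice from the eigenvalues via the trace-minimization principle $\min_{\bar{\bQ}\in\mathcal{O}(n)}\Tr(\bm{\Omega}\bar{\bQ}^\top\Diag(\blambda)\bar{\bQ}) = \sum_{i}\omega_i\lambda_{n+1-i}$, reduce to the linear program, and verify attainment with the anti-sorted pairing. The only difference is that the paper cites this trace identity (Theorem 2.1 of Liang et al.) while you sketch a self-contained proof via Birkhoff--von Neumann and rearrangement, and your explicit lower-bound-then-attainment structure is a slightly cleaner presentation of the same argument.
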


\begin{proof}
We may assume without loss of generality $\bm{C} \in \Symn$. Observe, 
\[
\langle \bC, \bX \rangle = \bigg\langle \frac{1}{2}(\bC + \bC^\top), \bX \bigg\rangle
\]
for all $\bC \in \R^{n\times n}$, so \eqref{eqn:main_linear_model} can be rewritten such that the objective function is the inner product of two symmetric matrices. Thus, for the remainder of the proof, we assume $\bC\in \Symn$ and $\bC = \bP \bm{\Omega} \bP^\top$ for orthogonal $\bP$ and $\bm{\Omega} = \Diag([\omega_{1}, \omega_{2}, \hdots, \omega_{n}])$ with $\omega_1 \geq \omega_2 \geq \hdots \geq \omega_n$. Utilizing the spectral decomposition of $\bX$,
\begin{align}\label{eqn:lin_model_rankn}
\min &\left\{ \langle \bC, \bX \rangle \;| \; {\bA} \blambda(\bX) \leq {\bb}, \; \bX \in \Symn \right\} \nonumber \\
&\hspace{0.25in}=  \min \left\{ \langle \bP \bm{\Omega} \bP^\top, \bQ \;\text{Diag}(\blambda) \bQ^\top \rangle \;| \; \bA \blambda \leq {\bb}, \; \bm{D}_n \blambda \leq \bm{0}, \;\bQ \in \mathcal{O}(n) \right\}   \nonumber \\
&\hspace{0.25in}= \min \left\{ \langle \bm{\Omega}, \bP^\top\bQ\; \text{Diag}(\blambda) \bQ^\top \bP \rangle \;| \; \bA \blambda \leq {\bb}, \; \bm{D}_n \blambda \leq \bm{0}, \;\bQ \in \mathcal{O}(n) \right\}  \nonumber \\
&\hspace{0.25in}= \min \left\{ \langle \bm{\Omega}, \bar{\bQ}^\top\; \text{Diag}(\blambda) \bar{\bQ}\rangle \;| \; \bA \blambda \leq {\bb}, \; \bm{D}_n \blambda \leq \bm{0}, \;\bar{\bQ} \in \mathcal{O}(n) \right\}  \nonumber \\
&\hspace{0.25in}= \min \left\{ \text{Tr}\left( \bm{\Omega}\bar{\bQ}^\top\; \text{Diag}(\blambda) \bar{\bQ}\right) \;| \; \bA \blambda \leq {\bb}, \; \bm{D}_n \blambda \leq \bm{0}, \;\bar{\bQ} \in \mathcal{O}(n) \right\}.
\end{align}
By Theorem 2.1 in \cite{liang2023generalizing}, it follows for any $\blambda \in \R^n$ satisfying $\bA \blambda \leq {\bb}, \; \bm{D}_n \blambda \leq \bm{0}$ that  
\begin{equation}\label{eqn:LP_main}
\min \left\{ \text{Tr}\left( \bm{\Omega}\bar{\bQ}^\top\; \text{Diag}(\blambda) \bar{\bQ}\right) \;| \;\bar{\bQ} \in \mathcal{O}(n) \right\} = \sum_{i=1}^{n} \omega_{i} \lambda_{n+1-i}. \nonumber 
\end{equation}
Therefore, 
\[
\min \left\{ \langle \bC, \bX \rangle \;| \; {\bA} \blambda(\bX) \leq {\bb}, \; \bX \in \Symn \right\} = \min\left\{\sum_{i=1}^{n} \omega_i \lambda_{n+1-i} \; | \; \bA \blambda \leq {\bb}, \; \bm{D}_n \blambda \leq \bm{0} \right\}.
\]
Let 
$\blambda^* \in \argmin\left\{  \sum_{i=1}^{n} \omega_{i} \lambda_{n+1-i} \; | \; \bA \blambda \leq {\bb}, \; \bm{D}_n \blambda \leq \bm{0}\right\}.
$
In-order to compute $\bX^*$ such that the constraints are satisfied and 
$
\Tr(\bC^\top \bX^*) = \sum_{i=1}^{n} \omega_i \lambda_{n+1-i}^*,
$
we must determine $\bar{\bQ}^*$ such that, 
\begin{equation}\label{eqn:Qstar}
\text{Tr}\left(\bm{\Omega}(\bar{\bQ}^*)^\top \text{Diag}(\blambda^*)\bar{\bQ}^*\right) = \sum_{i=1}^{n} \omega_i \lambda_{n+1-i}^*.
\end{equation}
With $\bar{\bQ}^*$ defined by \eqref{eqn:Qstar} and using \eqref{eqn:lin_model_rankn} to see $\bar{\bQ} = \bQ^\top \bP$, 
a global minimizer $\bX^*$ is given by, 
\[
\bX^* = \bm{P} (\bar{\bQ}^*)^\top \text{Diag}(\blambda^*) \bar{\bQ}^* \bm{P}^\top.
\]
Since $\bar{\bQ}^* = [\bm{e}_n \; \bm{e}_{n-1} \hdots \bm{e}_1]$ solves \eqref{eqn:Qstar}, we obtain our final result.
\end{proof}

\begin{remark}
Though Theorem \ref{thm:gen_lin_obj_thm} was stated and proven for $\bC\in \R^{n\times n}$ and $\bX \in \Symn$, the same general argument applies with slight modifications if $\bX \in \mathbb{C}^{n\times n}$ is Hermitian and $\bC \in \mathbb{C}^{n \times n}$.   
\end{remark}

Theorem \ref{thm:gen_lin_obj_thm} provides a straight-forward procedure for computing a global minimizer to \eqref{eqn:main_linear_model} by solving a single linear program and performing one spectral decomposition, and the result was independent of the convexity of $\mathcal{S}_{\blambda}(\bA,\bb)$; therefore, \eqref{eqn:main_linear_model} is solvable in polynomial-time and constitutes a reasonable subproblem for an algorithm. 

\section{Projecting onto the Eigenvalue Constraint Set}\label{sec:projection}
Projecting onto the constraint set is a crucial procedure in numerous constrained optimization algorithms. In this section, we utilize an argument similar to the one applied in Section \ref{sec:lin_obj} to compute projections onto $\mathcal{S}_{\blambda}(\bA,\bb)$. That is, given $\bm{Y} \in \R^{n\times n}$, we solve 
\begin{align}\label{eqn:projection_model}
\min&\; \frac{1}{2}\|\bX - \bY\|_F^2 \\
\text{s.t.}&\; \bA \lambda(\bX) \leq \bb \nonumber \\
           &\; {\color{black} \bX \in \Symn}. \nonumber 
\end{align}

\begin{theorem}\label{thm:projection_sol}
Let $\bY \in \R^{n\times n}$ with $\frac{1}{2}(\bY+\bY^\top) = \bm{P} \bm{\Omega} \bm{P}^\top$ for $\bm{P}\in \mathcal{O}(n)$ and $\bm{\Omega} = \Diag([\omega_1, \omega_2, \hdots, \omega_n])$ with $\omega_1\leq \omega_2 \leq \hdots \leq \omega_n$ and $\bar{\bm{\omega}}:=[\omega_n,\omega_{n-1},\hdots, \omega_1]^\top$. Then 
\[
\bX^* = \bm{P}\Diag([\lambda^*_n, \lambda^*_{n-1}, \hdots, \lambda^*_1])\bm{P}^\top  
\]
is a global minimizer of \eqref{eqn:projection_model} where
\begin{equation}\label{eqn:lambdastar_proj}
\blambda^* \in \argmin\left\{ \frac{1}{2} \| \blambda - \bar{\bm{\omega}} \|^2_2 \; | \;{\bA} \blambda \leq {\bb},\; \bm{D}_n \blambda \leq \bm{0} \right\}.
\end{equation}
\end{theorem}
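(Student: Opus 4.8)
The plan is to mirror the argument of Theorem~\ref{thm:gen_lin_obj_thm}, exploiting the fact that $\frac{1}{2}\|\bX-\bY\|_F^2$ again decouples into a part depending only on the eigenvalues of $\bX$ and a part that can be optimized over the orthogonal freedom in the spectral decomposition. First I would reduce to the symmetric case: since $\bX\in\Symn$, we have $\langle\bX,\bY\rangle=\langle\bX,\tfrac12(\bY+\bY^\top)\rangle$, so replacing $\bY$ by its symmetric part $\bm{P}\bm{\Omega}\bm{P}^\top$ alters the objective only by a constant independent of $\bX$, which does not affect the minimizer. Hence assume $\bY=\bm{P}\bm{\Omega}\bm{P}^\top$ with $\bm{\Omega}=\Diag([\omega_1,\dots,\omega_n])$ and $\omega_1\le\cdots\le\omega_n$.

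Next I would substitute the spectral decomposition $\bX=\bQ\,\Diag(\blambda)\,\bQ^\top$ with $\bQ\in\mathcal{O}(n)$ and $\blambda$ in descending order (encoded by $\bm{D}_n\blambda\le\bm{0}$), and expand
\begin{equation}\label{eqn:proj_expand}
\tfrac12\|\bX-\bY\|_F^2=\tfrac12\|\blambda\|_2^2-\langle\bX,\bY\rangle+\tfrac12\|\bm{\omega}\|_2^2,
\end{equation}
using the orthogonal invariance of the Frobenius norm to write $\|\bX\|_F^2=\|\blambda\|_2^2$ and $\|\bY\|_F^2=\|\bm{\omega}\|_2^2$, the latter being constant. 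Because the cross term enters \eqref{eqn:proj_expand} with a negative sign, minimizing over the orthogonal factor $\bQ$ for fixed $\blambda$ amounts to \emph{maximizing} $\langle\bX,\bY\rangle$ over $\mathcal{O}(n)$. Here I would invoke von Neumann's trace inequality in the same spirit as Theorem~2.1 of \cite{liang2023generalizing}: for fixed descending $\blambda$ and eigenvalues $\omega_1\le\cdots\le\omega_n$ of $\bY$,
\begin{equation}\label{eqn:proj_vN}
\max_{\bQ\in\mathcal{O}(n)}\langle\bQ\,\Diag(\blambda)\,\bQ^\top,\bY\rangle=\sum_{i=1}^{n}\lambda_i\,\omega_{n+1-i}=\langle\blambda,\bar{\bm{\omega}}\rangle,
\end{equation}
the maximum being attained by aligning the eigenvectors so the largest $\lambda_i$ pairs with the largest $\omega_j$. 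Substituting \eqref{eqn:proj_vN} into \eqref{eqn:proj_expand} and completing the square (using $\|\bm{\omega}\|_2^2=\|\bar{\bm{\omega}}\|_2^2$) gives the reduced value $\tfrac12\|\blambda-\bar{\bm{\omega}}\|_2^2$, so the projection problem collapses to the polyhedrally constrained eigenvalue program \eqref{eqn:lambdastar_proj}.

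Finally, to recover the minimizer I would identify the orthogonal matrix that attains the maximum in \eqref{eqn:proj_vN}. By the rearrangement inequality, with $\omega_1\le\cdots\le\omega_n$ the pairing that maximizes $\sum_i(\text{eigenvalue of }\bX\text{ on }\bm{p}_i)\,\omega_i$ assigns the smallest entry of $\blambda^*$ to the column $\bm{p}_1$ and the largest to $\bm{p}_n$; that is, the eigenvalues placed on the columns of $\bm{P}$ are $[\lambda^*_n,\lambda^*_{n-1},\dots,\lambda^*_1]$, yielding $\bX^*=\bm{P}\,\Diag([\lambda^*_n,\dots,\lambda^*_1])\,\bm{P}^\top$ exactly as stated. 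The main obstacle, as in the linear-objective case, is bookkeeping the two opposing orderings (ascending $\bm{\omega}$ versus descending $\blambda$) and verifying that the lower bound furnished by von Neumann's inequality is genuinely attained by a \emph{feasible} $\bX$ with the prescribed spectrum; establishing this attainment is what certifies global optimality rather than a mere bound.
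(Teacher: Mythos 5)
Your proposal is correct and follows essentially the same route as the paper's proof: expand $\tfrac12\|\bX-\bY\|_F^2$, reduce the cross term over the orthogonal factor via a trace (von Neumann--type) inequality as in Theorem 2.1 of \cite{liang2023generalizing}, complete the square to reach \eqref{eqn:lambdastar_proj}, and recover $\bX^*$ from the reversing permutation that attains the bound. The only cosmetic difference is that you phrase the inner step as maximizing $\langle\bX,\bY\rangle$ directly, whereas the paper negates $\bm{\Omega}$ and minimizes; these are identical computations.
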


\begin{proof}
Let $\frac{1}{2}(\bY+\bY^\top) = \bm{P} \bm{\Omega} \bm{P}^\top$ with $\bm{P}\in \mathcal{O}(n)$ and $\bm{\Omega} = \Diag([\omega_1, \omega_2, \hdots, \omega_n])$ such that $\omega_1\leq \omega_2 \leq \hdots \leq \omega_n$. Define $\bar{\bm{\Omega}}:= -\bm{\Omega}$ and $\bar{\bm{\omega}}:=[\omega_n,\omega_{n-1}, \hdots, \omega_1]^\top$. Utilizing the spectral decomposition of $\bX$, 
\begin{align}
\min&\left\{ \frac{1}{2} \| \bX - \bY \|_F^2 \; | \; \bA \blambda(\bX) \leq \bb, \bX \in \Symn \right\} \nonumber \\
&= \min\left\{ \frac{1}{2} \| \bX \|_F^2 - \langle \bX, \bY \rangle \; | \; \bA \blambda(\bX) \leq \bb, \bX \in \Symn \right\} + \frac{1}{2}\|\bY\|_F^2 \nonumber \\
&=\min\left\{ \frac{1}{2} \| \blambda \|^2 - \langle \bQ \Diag(\blambda) \bQ^\top, \bm{P}\bm{\Omega}\bm{P}^\top \rangle \; | \; {\bA} \blambda \leq {\bb},\; \bm{D}_n \blambda \leq \bm{0}, \bQ \in \mathcal{O}(n) \right\} + \frac{1}{2}\|\bY\|_F^2 \nonumber \\
&=\min\left\{ \frac{1}{2} \| \blambda \|^2 + \langle \bar{\bQ}^\top \Diag(\blambda) \bar{\bQ}, \bar{\bm{\Omega}} \rangle \; | \; {\bA} \blambda \leq {\bb},\; \bm{D}_n \blambda \leq \bm{0}, \bar{\bQ} \in \mathcal{O}(n) \right\} + \frac{1}{2}\|\bY\|_F^2 \nonumber \\
&=\min\left\{ \frac{1}{2} \| \blambda \|^2 - \langle \bar{\bm{\omega}}, \blambda\rangle \; | \; {\bA} \blambda \leq {\bb},\; \bm{D}_n \blambda \leq \bm{0}\right\} + \frac{1}{2}\|\bY\|_F^2 \nonumber \\
&=\min\left\{ \frac{1}{2} \| \blambda - \bar{\bm{\omega}} \|^2_2 \; | \;{\bA} \blambda \leq {\bb},\; \bm{D}_n \blambda \leq \bm{0} \right\} + \frac{1}{2}(\|\bY\|_F^2 - \|\bar{\bm{\omega}}\|^2_2 )  \nonumber \\
&=\;\;\frac{1}{2} \left(  \| \text{Proj}_{\mathcal{C}}(\bar{\bm{\omega}}) - \bar{\bm{\omega}} \|^2_2 + \|\bY\|_F^2 - \|\bar{\bm{\omega}}\|^2_2 \right) 
\end{align}
where $\mathcal{C}:=\left\{\bx \in \R^n \; | \; {\bA} \bx \leq {\bb},\; \bm{D}_n \bx \leq \bm{0}   \right\}$ and $\text{Proj}_{\mathcal{C}}(\cdot)$ is the projection operator onto $\mathcal{C}$. Note, the fourth equality above follows from Theorem 2.1 in \cite{liang2023generalizing}. Let $\blambda^*:=\text{Proj}_{\mathcal{C}}(\bar{\bm{\omega}})$. Note, if $\bar{\bQ}^* = [\bm{e}_n\; \bm{e}_{n-1}\; \hdots\; \bm{e}_1]$, then 
\begin{equation}\label{eqn:Qstar_proj}
\Tr(\bar{\bm{\Omega}}(\bar{\bQ}^*)^\top \Diag(\blambda^*) \bar{\bQ}^*)= \frac{1}{2} \left( \|\blambda^*-\bar{\bm{\omega}}\|^2 - \|\blambda^*\|^2 - \|\bar{\bm{\omega}}\|^2 \right) = \langle \blambda^*, -\bar{\bm{\omega}}\rangle\nonumber 
\end{equation}
and by the change-of-variable above $\bX^* = \bm{P} (\bar{\bQ}^*)^\top \Diag(\blambda^*) \bar{\bQ}^* \bm{P}^\top$ is a global minimizer of \eqref{eqn:projection_model}.
\end{proof}


Thus, similar to the linear objective problem, projecting onto the eigenvalue constraint only requires computing a spectral decomposition and solving a convex optimization problem. In this case, a projection onto $\{\bx \in \R^n \; | \; \bA \bx \leq \bb, \; \bm{D}_n \bx \leq \bm{0}\}$ must be computed. This problem can be solved via any convex optimization solver, but specialized fast algorithms for projecting onto a polyhedron have been developed \cite{hager2016projection}.
We now utilize our results to develop two optimization algorithms which obtain first-order stationary points to \eqref{eqn:SCO-Eig} when the constraint set is convex. 
\section{Spectrally Constrained Solvers}\label{sec:solvers}
We develop two first-order algorithms to compute stationary points for \eqref{eqn:SCO-Eig}. Our capacity to solve the linear objective problem motivates the construction of a Frank-Wolfe algorithm and accessible projections make possible the development of a projected gradient method. 
\subsection{Inexact Projected Gradient Method}\label{sec:projected_gradient_algorithm}
For $\bY \in \R^{n \times n}$ let $\text{Proj}_{\mathcal{S}_{\lambda}}(\cdot)$ be the operator which maps $\bY$ to an element of the argmin set of \eqref{eqn:projection_model}, i.e., 
\[
\text{Proj}_{\mathcal{S}_{\lambda}}(\bY) \in \argmin \left\{\frac{1}{2}\| \bX - \bY\|_F^2 \; | \; \bX \in \mathcal{S}_{\lambda}(\bA,\bb) \right\}.
\]
For $\mathcal{S}_{\blambda}(\bA,\bb)$ convex, the projection is unique. If the set is non-convex multiple optimal projections are likely present. 
Since inexact computations are the reality in implementation, we introduce a notion of inexact projections.
\begin{definition}
For convex $\mathcal{S}_{\blambda}(\bA,\bb)$ and parameter $\delta\geq 0$, the set of inexact projections of $\bm{Y}\in \R^{n\times n}$ onto $\mathcal{S}_{\blambda}(\bA,\bb)$ is 
\begin{equation}\label{eqn:inexact_proj}
\text{P}_{\cS_{\blambda}}^{\delta}(\bm{Y}):= \left\{ \bm{Z} \in \cS_{\blambda}(\bA,\bb) \; | \; \langle \bm{Z} - \bm{Y}, \bX - \bm{Z} \rangle \geq -\delta, \; \forall \bX \in \cS_{\blambda}(\bA,\bb) \right\}. 
\end{equation}
\end{definition}
The standard results for projections onto convex sets tell us $\bZ^*$ is the optimal projection of $\bY$ onto convex $\mathcal{S}_{\blambda}(\bA,\bb)$ if and only if 
\[
\langle \bm{Z}^* - \bm{Y}, \bX - \bm{Z}^* \rangle \geq 0, \; \forall \bX \in \cS_{\blambda}(\bA,\bb).
\]
Therefore, the set of inexact projections are the points which inexactly satisfy this condition where the level of inexactness is controlled by $\delta$. 
Clearly, $\text{P}_{\cS_{\blambda}}^{0}(\bm{Y}) = \text{Proj}_{\cS_{\blambda}}(\bm{Y})$ when $\cS_{\blambda}(\bA,\bb)$ is a convex set. 

For the sake of our analysis, we make the following assumption throughout Section \ref{sec:solvers}
\begin{assumption}\label{assumption_1}
 $\mathcal{S}_{\blambda}(\bA,\bb)$ is convex.
\end{assumption}
This assumption enables us to have a clear notion of first-order $\epsilon$-stationary points for \eqref{eqn:SCO-Eig}.
\begin{definition}\label{def:stat_pt}
Let $\epsilon \geq 0$. The point $\bX^* \in \mathcal{S}_{\blambda}(\bA,\bb)$ is a first-order $\epsilon$-stationary point of \eqref{eqn:SCO-Eig} provided
\[
\min \left\{ \langle \nabla F(\bX^*), \bX - \bX^* \rangle \; | \; \|\bX - \bX^*\|_F \leq 1, \; \bX \in \mathcal{S}_{\blambda}(\bA,\bb) \right\} \geq -\epsilon.
\]
\end{definition}
Algorithm \ref{alg:projected_grad} presents our inexact projected gradient method for solving \eqref{eqn:SCO-Eig}. 
The method can be neatly described. A point $\bX$ is selected from $\cS_{\blambda}(\bA,\bb)$ and the traditional gradient update step is taken to compute $\bX_+ = \bX - \alpha \nabla F(\bX)$ where $\alpha >0$ is the stepsize. Since it is not necessarily true $\bX_+ \in \cS_{\blambda}(\bA,\bb)$, because $\alpha$ could be too long of a step or $\nabla F(\bX)$ might point out of the constraint set, feasibility is maintained by projecting $\bX_+$ onto the constraint. To save computational expenses, the projection is done inexactly. Line-search is performed at each iteration to ensure a sufficient decrease is obtained. The algorithm terminates when the gap between consecutive iterates decreases below a provided tolerance. 

The approach is a fairly straightforward implementation of the traditional method in constrained optimization. The key novelty of Algorithm \ref{alg:projected_grad} comes from the fact we can solve the projection problem by Theorem \ref{thm:projection_sol}. Without this result the algorithm would not be implementable.
%
%
%
%
%

%
\begin{algorithm}
\caption{Inexact Projected Gradient Method}\label{alg:projected_grad}
\begin{algorithmic}[1]
    \Require $\bX_0 \in \mathcal{S}_{\blambda}(\bA,\bb)$; \; $\epsilon > 0$; \; $\delta \in [0,1)$; \; $\alpha > 0$;\; $\tau_1 \in (0,1)$;\; $h > 0$  
\For{$k=0,1,2 \hdots$} 
\State $h_k = h$ 
\State $\bX_{k+1} = \text{P}_{\mathcal{S}_{\lambda}}^{\delta}(\bX_k - h_k \nabla F(\bX_k))$ 
\If{$\|\bX_k - \bX_{k+1}\|_F \leq  \epsilon$}
    \State \textbf{Return}\; $\bX_k$ 
\Else
\While{$F(\bX_{k+1}) > F(\bX_k) - \alpha \| \bX_{k+1} - \bX_k\|_F^2$} 
    \State $h_k = \tau_1 \cdot h_k$
    \State $\bX_{k+1} = \text{P}_{\mathcal{S}_{\lambda}}^{\delta}(\bX_k - h_k \nabla F(\bX_k))$
\EndWhile
\EndIf
\EndFor
\end{algorithmic}
\end{algorithm}

We now explicitly state the convergence result of Algorithm \ref{alg:projected_grad}, where it is proved in the Appendix.
\begin{theorem}\label{thm:projected_grad_Thm}
Assume $F$ is gradient Lipschitz with parameter $L>0$, the initial level set, i.e., $\left\{ \bX \in \cS_{\blambda}(\bA,\bb) \;|\; F(\bX) \leq F(\bX_0) \right\}$, is a bounded subset of $\Symn$ with diameter $D$, there exists $F^*$ such that $F^* \leq F(\bX)$ for all $\bX\in \mathcal{S}_{\blambda}(\bA,\bb)$, there exists $M>0$ such that $\|\nabla F(\bX_k)\|_F \leq M$ for all $k\geq 0$, and Assumption 1 holds. 
If inexact projections are computed with sufficient accuracy, dependent on $\epsilon$, then Algorithm \ref{alg:projected_grad} will converge to a first-order $\epsilon$-stationary point of \eqref{eqn:SCO-Eig} in $\mathcal{O}(\epsilon^{-2})$ iterations. 
More explicitly, a first-order $\epsilon$-stationary point will be returned after no more than 
\[
  \left( \frac{4(D+Mh_{\text{low}}+1)^2}{h_{\text{low}}^2}\right) \left(\frac{F(\bX_0) - F^*}{\alpha}\right)\frac{1}{\epsilon^2}                                    \text{ iterations} 
\]
provided the accuracy of the inexact projections, $\delta$, satisfies $\delta \leq \min\left\{ \frac{h_{\text{low}}}{2}\epsilon, \; \frac{1}{2}\epsilon_{\text{tol}}^2\right\}$, where \\
$h_{\text{low}} := \tau_1/(L+2\alpha)$ 
and 
$
\epsilon_{\text{tol}}:= \frac{h_{low} \epsilon}{2(D+Mh_{low}+1)}. 
$
 
\end{theorem}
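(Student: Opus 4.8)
The plan is to run the standard four-stage convergence argument for inexact projected gradient descent, carrying the inexactness parameter $\delta$ through every error term. Two inequalities drive the whole proof. First, gradient-Lipschitzness gives the descent lemma $F(\bX_{k+1}) \le F(\bX_k) + \langle \nabla F(\bX_k), \bX_{k+1}-\bX_k\rangle + \tfrac{L}{2}\|\bX_{k+1}-\bX_k\|_F^2$. Second, testing the inexact-projection variational inequality \eqref{eqn:inexact_proj} (with $\bY = \bX_k - h_k\nabla F(\bX_k)$ and $\bZ = \bX_{k+1}$) at the feasible competitor $\bX = \bX_k$ yields $\langle \nabla F(\bX_k), \bX_{k+1}-\bX_k\rangle \le (\delta - \|\bX_{k+1}-\bX_k\|_F^2)/h_k$. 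Combining the two produces the workhorse estimate $F(\bX_{k+1}) \le F(\bX_k) + (\tfrac{L}{2}-\tfrac{1}{h_k})\|\bX_{k+1}-\bX_k\|_F^2 + \delta/h_k$, from which both the line-search analysis and the sufficient-decrease bound will flow.

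First I would show that the backtracking line search is well posed and that the accepted stepsize is bounded below by $h_{\text{low}} = \tau_1/(L+2\alpha)$. From the workhorse estimate, once $1/h_k \ge L/2 + 2\alpha$ the quadratic coefficient is at most $-\alpha$, so the Armijo test $F(\bX_{k+1}) \le F(\bX_k) - \alpha\|\bX_{k+1}-\bX_k\|_F^2$ succeeds as soon as the residual $\delta/h_k$ is absorbed; the hypothesis $\delta \le \tfrac12\epsilon_{\text{tol}}^2$, together with the fact that the consecutive-iterate gap stays above $\epsilon_{\text{tol}}$ prior to termination, supplies exactly the slack required. Because $h_k$ is contracted by $\tau_1\in(0,1)$ at each backtrack, the loop halts after finitely many steps and the value that is finally accepted cannot have been contracted below $\tau_1/(L+2\alpha)=h_{\text{low}}$.

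Next, since every accepted iterate obeys Armijo, summing $\alpha\|\bX_{k+1}-\bX_k\|_F^2 \le F(\bX_k)-F(\bX_{k+1})$ over $k=0,\dots,K-1$ and invoking the lower bound $F^*$ telescopes to $\alpha\sum_{k<K}\|\bX_{k+1}-\bX_k\|_F^2 \le F(\bX_0)-F^*$, so the smallest gap seen satisfies $\min_{k<K}\|\bX_{k+1}-\bX_k\|_F^2 \le (F(\bX_0)-F^*)/(\alpha K)$. It then remains to convert a small gap into the certificate of Definition \ref{def:stat_pt}. Testing the projection inequality at an arbitrary $\bX \in \cS_{\blambda}(\bA,\bb)$ with $\|\bX-\bX_k\|_F\le 1$ and splitting $\langle\nabla F(\bX_k),\bX-\bX_k\rangle$ through $\bX_{k+1}$ gives $\langle\nabla F(\bX_k),\bX-\bX_k\rangle \ge -\delta/h_{\text{low}} - \|\bX_{k+1}-\bX_k\|_F\,\|\bX-\bX_{k+1}\|_F/h_{\text{low}} - M\|\bX_{k+1}-\bX_k\|_F$. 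Bounding $\|\bX-\bX_{k+1}\|_F \le \|\bX-\bX_k\|_F + \|\bX_k-\bX_{k+1}\|_F \le 1+D$ (both iterates lie in the level set of diameter $D$) collapses the last two terms to $\|\bX_{k+1}-\bX_k\|_F\,(D+Mh_{\text{low}}+1)/h_{\text{low}}$; applying $\delta \le \tfrac{h_{\text{low}}}{2}\epsilon$ to the first term and $\|\bX_{k+1}-\bX_k\|_F \le \epsilon_{\text{tol}} = h_{\text{low}}\epsilon/(2(D+Mh_{\text{low}}+1))$ to the remainder produces the lower bound $-\epsilon/2 - \epsilon/2 = -\epsilon$. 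Thus any iterate with gap at most $\epsilon_{\text{tol}}$ is $\epsilon$-stationary, and the telescoped inequality forces such an iterate once $K \ge (F(\bX_0)-F^*)/(\alpha\epsilon_{\text{tol}}^2)$, which is precisely the claimed count after substituting the value of $\epsilon_{\text{tol}}$.

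The main obstacle is the consistent, simultaneous bookkeeping of $\delta$, which enters in two places with competing demands: it must be small relative to $\epsilon_{\text{tol}}^2$ so that the Armijo test still triggers despite projection error (securing the stepsize floor $h_{\text{low}}$), yet also small relative to $h_{\text{low}}\epsilon$ so that the residual $\delta/h_{\text{low}}$ in the stationarity estimate is dominated. Certifying $h_k \ge h_{\text{low}}$ in the inexact regime is the delicate point, since during backtracking the gap $\|\bX_{k+1}-\bX_k\|_F$ itself shrinks and one must guarantee it cannot collapse to a scale at which $\delta$ overwhelms the guaranteed decrease; the two-sided requirement $\delta \le \min\{\tfrac{h_{\text{low}}}{2}\epsilon,\ \tfrac12\epsilon_{\text{tol}}^2\}$ is engineered precisely to rule this out. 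The remaining ingredients — the descent lemma, the telescoping, and the variational-inequality splitting — are routine once these thresholds are pinned down.
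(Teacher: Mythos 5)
Your proposal follows essentially the same route as the paper's own proof in the Appendix: the same case split on whether $\|\bX_{k+1}-\bX_k\|_F$ exceeds $\epsilon_{\text{tol}}$, the same use of the inexact-projection inequality tested at $\bX_k$ (for the Armijo/sufficient-decrease step and the stepsize floor $h_{\text{low}}$) and at an arbitrary feasible $\bX$ with $\|\bX-\bX_k\|_F\le 1$ (for the stationarity certificate), the same telescoping, and the same constants. One small expositional slip: the backtracking threshold should be $1/h_k \ge L+2\alpha$ rather than $L/2+2\alpha$, since absorbing the residual via $\delta \le \tfrac12\|\bX_{k+1}-\bX_k\|_F^2$ leaves the coefficient $\tfrac{L}{2}-\tfrac{1}{2h_k}$, which is $\le -\alpha$ only under the former condition --- consistent with the value $h_{\text{low}}=\tau_1/(L+2\alpha)$ you (and the paper) actually use.
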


\subsection{Inexact Frank-Wolfe Algorithm}\label{sec:frankwolfe}
We now present a Frank-Wolfe algorithm for solving \eqref{eqn:SCO-Eig}. 
The algorithm and analysis we present are an extension of the work in the concise technical report of Lacoste-Julien \cite{lacoste2016convergence}. 
The author in this paper presents the first Frank-Wolfe method for smooth non-convex functions over a compact and convex constraint.
Our approach extends the work in \cite{lacoste2016convergence} by removing the compactness assumption. To accomplish this, we utilized a different notion of first-order stationarity, replaced the compactness assumption with the weaker assumption that the initial level set of the objective function is bounded, and introduced a modified subproblem for our model.  
Traditional Frank-Wolfe approaches would require solving subproblems of the form
\begin{align}\label{eqn:SCO_eig_linear+ball}
\min&\; \langle \bC, \bX - \bX_0 \rangle \\
\text{s.t.}&\; \bA \blambda(\bX) \leq \bb \nonumber \\
           &\; \|\bX-\bX_0\|_F \leq 1 \nonumber \\
           &\; \bX \in \Symn. \nonumber 
\end{align}
This model appears outside the scope of \eqref{eqn:SCO-Eig};
however, Theorem \ref{thm:FW_alg_alt_opts} below shows \eqref{eqn:SCO_eig_linear+ball} can be bounded by an alternative optimization model which only contains linear constraints on the eigenvalues. 
\begin{theorem}\label{thm:FW_alg_alt_opts}
Given any $\bX_0 \in \Symn$, we have the following upper bound to \eqref{eqn:SCO_eig_linear+ball}
\begin{multline}\label{eqn:opt_approx_lower}
\bigg| \min\left\{ \langle \bC, \bX - \bX_0 \rangle \; | \; \bX \in \mathcal{S}_{\blambda}(\bA,\bb), \;  \|\bX-\bX_0\|_{F} \leq 1 \right\}\bigg| \\ \leq 
\bigg|\min\left\{ \langle \bC, \bX - \bX_0 \rangle \; | \; \bX \in \mathcal{S}_{\blambda}(\bA,\bb), \;  \|\blambda(\bX)-\blambda(\bX_0)\|_{\infty} \leq 1 \right\}\bigg|.
\end{multline}
\end{theorem}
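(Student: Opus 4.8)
The plan is to show that the feasible region of the Frobenius-ball problem (the left-hand side) is contained in the feasible region of the eigenvalue-ball problem (the right-hand side), so that the right-hand minimization runs over a larger set, and then to convert the resulting ordering of minima into the stated ordering of absolute values. The containment rests on Weyl's perturbation inequality for symmetric matrices: for any $\bX, \bX_0 \in \Symn$ one has $|\lambda_i(\bX) - \lambda_i(\bX_0)| \leq \|\bX - \bX_0\|_2$ for every $i$, whence $\|\blambda(\bX) - \blambda(\bX_0)\|_\infty \leq \|\bX - \bX_0\|_2 \leq \|\bX - \bX_0\|_F$. Thus $\|\bX - \bX_0\|_F \leq 1$ forces $\|\blambda(\bX) - \blambda(\bX_0)\|_\infty \leq 1$, and intersecting both conditions with $\mathcal{S}_{\blambda}(\bA,\bb)$ yields the desired subset relation between the two feasible sets.

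Both problems minimize the same linear objective $\langle \bC, \bX - \bX_0\rangle$, so minimizing over the larger eigenvalue-constrained set produces an optimal value no larger than that over the Frobenius-constrained set; writing $m_F$ and $m_\lambda$ for the left- and right-hand minima, this is $m_\lambda \leq m_F$. To pass to absolute values I would use that $\bX_0$ is feasible for both problems (it is the incumbent Frank-Wolfe iterate, so $\bX_0 \in \mathcal{S}_{\blambda}(\bA,\bb)$, and it trivially satisfies both ball constraints) and attains objective value $0$; hence $m_F \leq 0$ and $m_\lambda \leq 0$. For real numbers satisfying $m_\lambda \leq m_F \leq 0$ we have $|m_F| \leq |m_\lambda|$, which is exactly the claimed inequality.

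The step that carries the argument is the final absolute-value bookkeeping rather than the eigenvalue estimate: the subset inclusion only delivers $m_\lambda \leq m_F$, and this reverses into $|m_F| \leq |m_\lambda|$ solely because both optima are non-positive. That non-positivity is where feasibility of $\bX_0$ is essential, so the one point to handle carefully is the tacit assumption $\bX_0 \in \mathcal{S}_{\blambda}(\bA,\bb)$, which is exactly the setting in which the subproblem is solved; Weyl's inequality itself is classical and presents no real obstacle.
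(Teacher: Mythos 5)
Your proposal is correct and follows essentially the same route as the paper: an eigenvalue perturbation inequality gives $\|\blambda(\bX)-\blambda(\bX_0)\|_\infty \le \|\bX-\bX_0\|_F$, hence the Frobenius-ball feasible set is contained in the eigenvalue-ball feasible set, and the resulting ordering of the two (non-positive, since $\bX_0$ is feasible for both) minima reverses under absolute values. The only cosmetic difference is that you invoke Weyl's inequality directly where the paper cites the $\ell_2$ bound from Horn and Johnson, and you make explicit the tacit assumption $\bX_0 \in \mathcal{S}_{\blambda}(\bA,\bb)$ that the paper's proof also uses.
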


\begin{proof} From Section 6.3 of \cite{horn_johnson_1991}, we know for any $\bX, \bX_0 \in \Symn$
\[
\| \blambda(\bX) - \blambda(\bX_0)\|_2 \leq \|\bX - \bX_0\|_2 
\text{ which implies }  
\| \blambda(\bX) - \blambda(\bX_0)\|_{\infty} \leq \|\bX - \bX_0\|_F.  
\]
Thus, given any $\bX \in \Symn$ such that $\|\bX - \bX_0\|_F \leq 1$ it follows $\| \blambda(\bX) - \blambda(\bX_0)\|_{\infty} \leq 1$ and $  \left\{\bX \in \Symn \; | \;  \|\bX - \bX_0\|_F \leq 1\right\} \subset  \left\{\bX \in \Symn \; | \;  \| \blambda(\bX) - \blambda(\bX_0)\|_{\infty} \leq 1\right\}$. Hence 
\begin{multline}\label{eqn:opt_approxI}
\min\left\{ \langle \bC, \bX - \bX_0 \rangle \; | \; \bX \in \mathcal{S}_{\blambda}(\bA,\bb), \;  \|\bX-\bX_0\|_{F} \leq 1 \right\} \\ \geq  \min\left\{ \langle \bC, \bX - \bX_0 \rangle \; | \; \bX \in \mathcal{S}_{\blambda}(\bA,\bb), \;  \|\blambda(\bX)-\blambda(\bX_0)\|_{\infty} \leq 1 \right\}, 
\end{multline}
and we know the optimal value of both models is non-positive since $\bX_0$ is a feasible solution for each model. Therefore, the right-hand side of \eqref{eqn:opt_approxI} is greater in terms of absolute magnitude. 
\end{proof}
Theorem \ref{thm:FW_alg_alt_opts} results in our ability to approximate the solution to \eqref{eqn:SCO_eig_linear+ball} by solving an alternative optimization model which only appends additional linear inequality constraints on the eigenvalues. 
Since Definition \ref{def:stat_pt} is equivalent to the left-hand side of \eqref{eqn:opt_approx_lower} being less than $\epsilon$, it follows the right-hand side of \eqref{eqn:opt_approx_lower} provides an estimate of the optimality of $\bX_0$ via a tractable subproblem. Therefore, in our Frank-Wolfe approach, instead of solving \eqref{eqn:SCO_eig_linear+ball}, we solve models of the form 
\[
\min\left\{ \langle \bC, \bX - \bX_0 \rangle \; | \; \bX \in \mathcal{S}_{\blambda}(\bA,\bb), \;  \|\blambda(\bX)-\blambda(\bX_0)\|_{\infty} \leq 1 \right\}
\]
and use these solutions to bound the first-order stationarity condition of \eqref{eqn:SCO-Eig}. 

Algorithm \ref{alg:frank_wolfe} is our proposed inexact Frank-Wolfe algorithm for \eqref{eqn:SCO-Eig}. At each iteration it seeks to improve upon the current iterate by minimizing a first-order approximation of the function over the constraint set. We approximate the traditional Frank-Wolfe subproblem by Theorem \ref{thm:FW_alg_alt_opts}. The subproblem then produces a direction which will decrease the value of the objective function, and a stepsize is determined which maintains the feasibility of the next iterate. 

\begin{algorithm}
\caption{Inexact Frank-Wolfe algorithm}\label{alg:frank_wolfe}
\begin{algorithmic}[1]
\Require $\bm{X}_0 \in \mathcal{S}_{\blambda}(\bA,\bb)$; \; $\epsilon > 0$;\;  $\alpha \in [0,1)$;\; $\delta \in [0,\alpha \epsilon)$; \; $\Theta > 0$
\For {$k=0,1,2, \hdots$}
    \State Approximately compute a solution, $\bm{D}_k$, to  
    \vspace{-0.1in}
\begin{equation}\label{eqn:FW_subprob}    
m_k^*:=\min\left\{ \langle \nabla F(\bX_k), \bm{D} - \bX_k \rangle \; | \; \bm{D} \in \mathcal{S}_{\blambda}(\bA,\bb), \;  \|\blambda(\bm{D})-\blambda(\bX_k)\|_{\infty} \leq 1 \right\} 
\end{equation}
    \State such that $m_k:= \langle \nabla F(\bX_k), \bm{D}_k - \bX_k\rangle \leq 0$ and is within $\delta$ of $m_k^*$
    \If{$|m_k| \leq (1-\alpha)\epsilon$}
        \State \textbf{Return}\; $\bX_k$ 
    \Else
        \State Set $\gamma_k := \min \left\{ |m_k|/\Theta, 1 \right\}$ 
        \State Update $\bX_{k+1}:=\bX_k + \gamma_k \left( \bm{D}_k - \bX_k\right)$
    \EndIf
\EndFor
\end{algorithmic}
\end{algorithm}

Theorem \ref{thm:FW_alg} states the sublinear convergence rate of Algorithm \ref{alg:frank_wolfe} to stationary points of \eqref{eqn:SCO-Eig}.

\begin{theorem}\label{thm:FW_alg}
Assume  $F$ is gradient Lipschitz with parameter $L>0$, the initial level set is bounded, there exists $F^*$ such that $F^* \leq F(\bX)$ for all $\bX\in \mathcal{S}_{\blambda}(\bA,\bb)$, and Assumption 1 holds. Define 
\[
\rho:=  \sup \left\{\|\bY - \bX\|_F^2 \; | \; F(\bX) \leq F(\bX_0), \; \| \blambda(\bX) - \blambda(\bY)\|_{\infty} \leq 1, \; \bY, \bX \in \mathcal{S}_{\blambda}(\bA,\bb) \right\}.
\]
If $\Theta \geq \rho L$, then Algorithm \ref{alg:frank_wolfe} will compute a first-order $\epsilon$-stationary point to \eqref{eqn:SCO-Eig} in $\mathcal{O}(\epsilon^{-2})$ iterations. More precisely, a first-order $\epsilon$-stationary point will be obtained after no more than 
\[
K \geq \bigg\lceil \frac{\max\left\{2 (F(\bX_0) - F^*), \Theta \right\}^2}{(1-\alpha)^2 \epsilon^2}-1 \bigg\rceil   \text{ iterations.}
\]

\end{theorem}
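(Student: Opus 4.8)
The plan is to adapt the non-convex Frank--Wolfe analysis of Lacoste-Julien \cite{lacoste2016convergence}, treating $|m_k|$ as the relevant first-order gap and invoking Theorem \ref{thm:FW_alg_alt_opts} to translate the computed gap into the stationarity notion of Definition \ref{def:stat_pt}. First I would show, by induction on $k$, that every iterate is feasible and stays in the initial level set, i.e., $\bX_k \in \mathcal{S}_{\blambda}(\bA,\bb)$ and $F(\bX_k) \leq F(\bX_0)$. Feasibility is immediate from Assumption \ref{assumption_1}: since $\bm{D}_k, \bX_k \in \mathcal{S}_{\blambda}(\bA,\bb)$ and $\gamma_k \in [0,1]$, convexity gives $\bX_{k+1} = (1-\gamma_k)\bX_k + \gamma_k \bm{D}_k \in \mathcal{S}_{\blambda}(\bA,\bb)$. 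Level-set containment will follow once the descent bound below is shown to be nonnegative.

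The core estimate is a per-iteration descent inequality. Applying the descent lemma for the $L$-gradient-Lipschitz $F$ along $\bX_{k+1} - \bX_k = \gamma_k(\bm{D}_k - \bX_k)$ gives
\[
F(\bX_{k+1}) \leq F(\bX_k) + \gamma_k m_k + \frac{L\gamma_k^2}{2}\|\bm{D}_k - \bX_k\|_F^2 .
\]
Here the inductive hypothesis $F(\bX_k) \leq F(\bX_0)$ together with the subproblem constraint $\|\blambda(\bm{D}_k)-\blambda(\bX_k)\|_\infty \leq 1$ and $\bm{D}_k,\bX_k \in \mathcal{S}_{\blambda}(\bA,\bb)$ places the pair $(\bX_k,\bm{D}_k)$ in the feasible set of the supremum defining $\rho$, so $\|\bm{D}_k - \bX_k\|_F^2 \leq \rho$. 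Substituting $\gamma_k = \min\{|m_k|/\Theta,1\}$, using $m_k \leq 0$ and $\Theta \geq \rho L$, and splitting into the cases $|m_k| \leq \Theta$ and $|m_k| > \Theta$ yields the single bound
\[
F(\bX_k) - F(\bX_{k+1}) \geq \frac{|m_k|}{2}\min\left\{\frac{|m_k|}{\Theta},\,1\right\} \geq 0,
\]
whose nonnegativity closes the induction. I would stress that $\rho < \infty$ precisely because the \emph{initial level set is bounded} and the subproblem uses the $\ell_\infty$ eigenvalue constraint (which confines $\blambda(\bm{D}_k)$, hence $\|\bm{D}_k\|_F$) rather than a Frobenius ball; this finiteness of $\rho$ along the whole trajectory is what lets us drop the compactness hypothesis of \cite{lacoste2016convergence}.

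Next I would telescope. Writing $g^\ast := \min_{0\leq k \leq K}|m_k|$ and using the monotonicity of $x \mapsto \tfrac{x}{2}\min\{x/\Theta,1\}$ on $[0,\infty)$, summing the descent bound over $k=0,\dots,K$ against $\sum_{k=0}^{K}\bigl(F(\bX_k)-F(\bX_{k+1})\bigr) \leq F(\bX_0)-F^\ast$ gives
\[
(K+1)\,\frac{g^\ast}{2}\min\left\{\frac{g^\ast}{\Theta},1\right\} \leq F(\bX_0)-F^\ast .
\]
Separating the regimes $g^\ast \geq \Theta$ and $g^\ast < \Theta$ and applying the elementary inequality $\sqrt{2\Theta\,(F(\bX_0)-F^\ast)} \leq \max\{2(F(\bX_0)-F^\ast),\Theta\}$ consolidates both into
\[
\min_{0\leq k \leq K}|m_k| \leq \frac{\max\{2(F(\bX_0)-F^\ast),\,\Theta\}}{\sqrt{K+1}} .
\]

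Finally I would connect $|m_k|$ to stationarity and invert. If the algorithm returns at a step with $|m_k| \leq (1-\alpha)\epsilon$, then since $m_k$ is within $\delta$ of the subproblem optimum $m_k^\ast$ (both nonpositive) and $\delta < \alpha\epsilon$, we obtain $|m_k^\ast| \leq |m_k| + \delta < \epsilon$; because $|m_k^\ast|\leq\epsilon$ is, via Theorem \ref{thm:FW_alg_alt_opts}, equivalent to the condition of Definition \ref{def:stat_pt} with tolerance $\epsilon$, the returned $\bX_k$ is a first-order $\epsilon$-stationary point. Conversely, if no return has occurred through iteration $K$, then $g^\ast > (1-\alpha)\epsilon$, which with the displayed rate forces $K+1 < \max\{2(F(\bX_0)-F^\ast),\Theta\}^2/\bigl((1-\alpha)^2\epsilon^2\bigr)$; contraposition and a ceiling produce the stated iteration count. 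I expect the main obstacle to be the step that guarantees $\|\bm{D}_k-\bX_k\|_F^2 \leq \rho$ with $\rho$ finite \emph{uniformly along the trajectory}: this is exactly where the bounded-level-set assumption and the $\ell_\infty$ eigenvalue form of the subproblem must work together, and it is the technical heart of removing the compactness assumption.
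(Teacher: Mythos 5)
Your proposal is correct and follows essentially the same route as the paper's proof: the same descent-lemma estimate with $\|\bm{D}_k-\bX_k\|_F^2\leq\rho$ justified by induction on level-set containment, the same case split on $|m_k|$ versus $\Theta$, and the same use of Theorem \ref{thm:FW_alg_alt_opts} plus the $\delta$-inexactness bound to pass from $|m_k|\leq(1-\alpha)\epsilon$ to $\epsilon$-stationarity. The only difference is that you write out the telescoping argument and the bound $\min_{0\leq k\leq K}|m_k| \leq \max\{2(F(\bX_0)-F^*),\Theta\}/\sqrt{K+1}$ explicitly, whereas the paper delegates that step to Theorem 1 of \cite{lacoste2016convergence}.
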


\begin{proof}
We first show $\rho$ provides a usable bound for all iterations of Algorithm \ref{alg:frank_wolfe}. If $|m_0| \leq \Theta$, then by the gradient Lipschitz condition on $F$ we have,  
\begin{align}\label{eqn:F_bound_1}
F(\bX_{1}) &\leq F(\bX_0) + \gamma_0\langle \nabla F(\bX_0), \bm{D}_0 - \bX_0 \rangle + \frac{L\gamma_0^2}{2}\| \bm{D}_0 - \bX_0\|_F^2 \nonumber \\
&\leq F(\bX_0) + \gamma_0\langle \nabla F(\bX_0), \bm{D}_0 - \bX_0 \rangle + \frac{\rho L\gamma_0^2}{2} \nonumber \\
&\leq F(\bX_0) + \gamma_0 m_0 + \frac{\Theta}{2}\gamma_0^2 \nonumber\\
&\leq F(\bX_0) -\frac{|m_0|^2}{\Theta} + \frac{1}{2\Theta}|m_0|^2 \nonumber \\
&= F(\bX_0) - \left(\frac{1}{2\Theta}\right)|m_0|^2
\end{align}
where the second inequality follows from the definition of $\rho$. If instead $|m_0| > \Theta$, then we have 
\begin{equation}\label{eqn:F_bound_2}
F(\bX_{1}) \leq F(\bX_0) - |m_0| + \frac{\Theta}{2} < F(\bX_0) - \frac{\Theta}{2}.
\end{equation}
Therefore, regardless of how the stepsize is computed, $F(\bX_1) \leq F(\bX_0)$ which implies $\bX_1$ is contained in the initial level set. So, the bound provided by $\rho$ remains valid for all iterations and by induction on the iteration count we have   
\begin{equation}\label{eqn:FW_eq_help}
F(\bX_{k+1}) \leq F(\bX_k) - \min\left\{\frac{|m_k|^2}{2\Theta}, |m_k| - \frac{\Theta}{2}\mathbf{1}_{\left\{|m_k| > \Theta\right\}} \right\}. 
\end{equation}
Thus, using the same argument presented for Theorem 1 in \cite{lacoste2016convergence} it follows for all $K \geq 0$,
\[
\min_{0\leq k \leq K} |m_k| \leq \frac{\max\left\{2(F(\bX_0) - F^*), \Theta \right\}}{\sqrt{K+1}}.
\]
Since $m_k$ measures the first-order stationary condition inexactly, it then follows an $\epsilon$-stationary point will be obtained provided $|m_t| \leq (1-\alpha)\epsilon$ where $t \in \{0,1\hdots, K\}$ produces the minimum value of $|m_k|$ over all the iterates since $|m_t^*| \leq |m_t| \leq |m_t| + \delta  \leq \epsilon$. Finally, by \eqref{eqn:opt_approx_lower} it follows if $|m_t^*| \leq \epsilon$ then  
\[
\bigg| \min\left\{ \langle \bC, \bX - \bX_t \rangle \; | \; \bX \in \mathcal{S}_{\blambda}(\bA,\bb), \;  \|\bX-\bX_t\|_{F} \leq 1 \right\}\bigg|  \leq \epsilon. 
\]
\vspace{-0.2in}
\end{proof}
The condition in Theorem \ref{thm:FW_alg} requiring $\Theta \geq \rho L$ might appear unruly since both $\rho$ and $L$ could be unknown. In practice this difficulty is overcome by updating $\Theta$ on an iterative basis. For example, if $\Theta \geq \rho L$, then the inequality in \eqref{eqn:FW_eq_help} shall hold. If this inequality fails, then $\Theta$ can be increased until the inequality is satisfied at the current iterate. In our implementation of Algorithm \ref{alg:frank_wolfe}, we update the value of $\Theta$ at each iteration depending on whether or not the value of the objective function was improved. This is why we ensure an improvement of the value of the objective function. 

Both Algorithms \ref{alg:projected_grad} and \ref{alg:frank_wolfe} converge sublinearly to first-order stationary points of \eqref{eqn:SCO-Eig} under reasonable assumptions. The convexity assumption is necessary for the current convergence analysis of both algorithms and is required to maintain the feasibility of the iterates generated by Algorithm \ref{alg:frank_wolfe}. The projected gradient method however can be applied successfully to instances of \eqref{eqn:SCO-Eig} with non-convex constraints and maintain feasibility. We observe this in Section \ref{sec:quadratic_equations} when Algorithm \ref{alg:projected_grad} is applied to solve systems of quadratic equations.

\begin{remark}\label{rem:block_model}
The theory and algorithms we have developed extend to the block optimization model
\begin{align}\label{eqn:block_SCO}
\min&\; F(\bX_1, \hdots, \bX_k) \\
\text{s.t.}&\; \bX_i \in \mathcal{S}_{\blambda}(\bA_i, \bb_i), \; i = 1, \hdots, k. \nonumber  
\end{align}
Letting $\mathcal{V}:=\mathcal{S}^{n_1 \times n_1} \times \cdots \times \mathcal{S}^{n_k \times n_k}$ with vectors $\mathcal{X} = (\bX_1, \hdots, \bX_k)$ and associated inner product $\langle \cX, \cY \rangle_{\cV}:= \sum_{i=1}^{k} \langle \bX_i, \bY_i\rangle$,  
the analysis in the Appendix proves Algorithm \ref{alg:projected_grad} computes first-order stationary points of \eqref{eqn:block_SCO} where exact projections onto $\mathcal{S}_{\blambda}(\bA_1, \bb_1) \times \cdots \times \mathcal{S}_{\blambda}(\bA_k, \bb_k)$ are readily computed by projecting onto each component $\mathcal{S}_{\blambda}(\bA_i, \bb_i)$. Similarly, the required subproblem to implement a modified version of Algorithm \ref{alg:frank_wolfe} on \eqref{eqn:block_SCO} is equivalent to solving $k$ subproblems of the form described in Section \ref{sec:frankwolfe}. Proving convergence of a modified version of Algorithm \ref{alg:frank_wolfe} to stationary points of \eqref{eqn:block_SCO} only requires minor alterations to the current analysis.
\end{remark}

\section{Numerical Experiments}\label{sec:experiments}
The \eqref{eqn:SCO-Eig} paradigm applies to many constrained matrix problems: semidefinite programming \cite{vandenberghe1996semidefinite}, condition number constraints \cite{tanaka2014positive,won2013condition}, and rank constrained optimization \cite{li2023partial,zhu2018global} to list a few. 
In this section, we first demonstrate the performance of Algorithms \ref{alg:projected_grad} and \ref{alg:frank_wolfe} on a popular preconditioning model to showcase how the convergence theory aligns with what is observed in practice. Then, we demonstrate the applicability of our methodology for solving systems of quadratic equations. Our numerical results demonstrate our method can outperform classical approaches for solving quadratic systems such as Newton's method. 

\subsection{Preconditioning}\label{sec:precond_example}
One could argue solving systems of linear equations is the most important task in applied mathematics. Many iterative methods have been devised to solve  $\bA \bx = \bb$ with $\bA \in \R^{m \times n}$ full rank and $\bb \in \R^m$ such as the Jacobi method \cite{saad2003iterative} and the conjugate gradient method \cite{hestenes1952methods}. Methods for solving linear systems often converge linearly with the rate dependent upon the condition number of the matrix $\bA$, i.e., 
\[
\kappa(\bA):= \frac{\sigma_{\max}(\bA)}{\sigma_{\min}(\bA)}
\]
where $\sigma_{\max}(\bA)$ and $\sigma_{\min}(\bA)$ are the largest and smallest singular values of $\bA$ respectively. If $\bA \in \mathcal{S}_{++}^{n \times n}$, then $\kappa(\bA) = \lambda_1(\bA)/\lambda_n(\bA)$. Table 1 in \cite{qu2022optimal} compares how the convergence rates of different iterative methods depend on $\kappa(\bA)$. The art of matrix preconditioning is to form an equivalent linear system with an improved condition number which can be solved faster by iterative methods. This is accomplished by multiplying $\bA$ by simple matrices $\bX$ and $\bY$ such that $\bA\bX$, $\bY \bA$ or $\bY \bA \bX$ have an improved condition number. Preconditioning is a well-studied aspect of numerical linear algebra \cite{chen_2005,wathen_2015} and numerous approaches have been proposed. For example, a recent paper proposes optimal diagonal preconditioners, i.e., $\bX$ and $\bY$ are diagonal matrices \cite{qu2022optimal}. One popular preconditioning model proposed by Benson \cite{benson1982iterative} is 
\begin{align}\label{eqn:benson_precond}
\min&\; \| \bA \bX - \bm{I} \|_F \\
\text{s.t.}&\; \bX \in \mathcal{M} \nonumber 
\end{align}
where $\mathcal{M}$ is a special set of matrices. To reduce computation, Benson \cite{benson1982iterative} assumed sufficiently sparse matrices in $\mathcal{M}$, e.g., positive diagonal matrices. We demonstrate the performance of our algorithms by solving \eqref{eqn:benson_precond} with various convex eigenvalue constraints, where we do not restrict to sparse matrices: 
\begin{equation}\label{eqn:S1}
\mathcal{M}_1 := \left\{ \bX \in \Symn \; | \; \lambda_i(\bX) \in [0.001,1], \; i=1,\hdots, n   \right\},
\end{equation}
\begin{equation}\label{eqn:S2}
\mathcal{M}_2 :=  \left\{ \bX \in \Symn \; | \;  \lambda_1(\bX) - \kappa \lambda_n(\bX) \leq 0, \; \lambda_n(\bX) \geq 0 \right\},
\end{equation}
and
\begin{equation}\label{eqn:S3}
\mathcal{M}_3 :=  \left\{ \bX \in \Symn \; | \; \bm{c}_i^\top \lambda(\bX) \leq 1, \; i=1,\hdots, m  \right\},
\end{equation}
where $\kappa > 0 $ and $\bm{c}_i = [i, i-1, \hdots, 1, 0, \hdots, 0]^\top$ for all $i$. The constraints $\cM_1$ and $\cM_2$ are reasonable choices for the constraint in \eqref{eqn:benson_precond} because they enforce the preconditioning matrix to be well-conditioned. The first ensures the condition number of the preconditioning matrix $\bX$ is bounded above by 1000 with bounded eigenvalues between 0.001 and 1; the second ensures the condition number is bounded above by $\kappa$ while not enforcing an upper bound on the eigenvalues. Note, $\cM_2$ does admit the zero matrix as feasible which has an undefined condition number, but this is often of no consequence because often non-zero matrices are present in $\cM_2$ which yield better objective function values. The purpose of the last constraint is to showcase the algorithms applied to a general convex set as generated by Theorem \ref{thm:convex_constraint}. The goal of these experiments is to evince the functionality of Algorithms \ref{alg:projected_grad} and \ref{alg:frank_wolfe} applied with different constraints on an objective function of practical import. Developing new specialized approaches for preconditioning is outside the scope of this paper.   

We applied Algorithms \ref{alg:projected_grad} and \ref{alg:frank_wolfe} on three different instances of \eqref{eqn:benson_precond} with $\cM_1$, $\cM_2$, and $\cM_3$ serving as the constraint space $\cM$. The matrix $\bA \in \mathcal{S}^{250 \times 250}$ to be preconditioned was generated as $\bA = \bm{V} \bm{V}^\top$ where each element of $\bm{V}\in \R^{250 \times 250}$ was drawn from a standard normal. Figure \ref{fig:Alg_plots} displays the convergence plots of the experiments.  

\begin{figure}[h]
\centering
\begin{subfigure}{0.48\textwidth}
    \includegraphics[width=\textwidth]{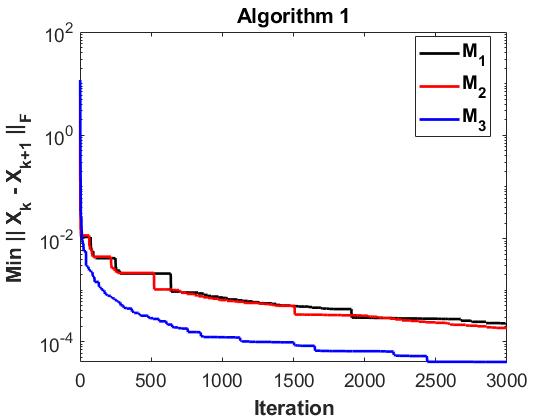}
    \label{fig:first}
\end{subfigure}
\begin{subfigure}{0.48\textwidth}
    \includegraphics[width=\textwidth]{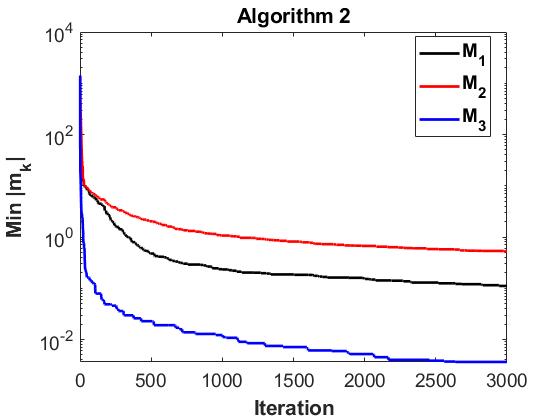}
    \label{fig:second}
\end{subfigure}       
\vspace{-0.2in}
\caption{Convergence plots of the first-order $\epsilon$-stationary condition for Algorithms \ref{alg:projected_grad} and \ref{alg:frank_wolfe} applied to \eqref{eqn:benson_precond}. The y-axis is in log-scale. For Algorithm \ref{alg:projected_grad} the y-axis measures the minimum distance between consecutive iterates at iteration $k$. For Algorithm \ref{alg:frank_wolfe} the y-axis measures the best solution obtained to \eqref{eqn:FW_subprob} at iteration $k$.}
\label{fig:Alg_plots}
\end{figure}
Both methods were initialized at the identity matrix and ran for a total of 3000 iterations. The convergence plots display clearly the sublinear convergence rates guaranteed by Theorems \ref{thm:projected_grad_Thm} and \ref{thm:FW_alg}. 
These numerical results demonstrate the algorithms' performance aligns with the developed theory. We now turn to an application where our methodology outperforms classical approaches. 

\subsection{Solving Systems of Quadratic Equations}\label{sec:quadratic_equations}
Solving systems of quadratic equations occurs regularly across scientific domains.  
Two highly studied examples are phase retrieval and the algebraic Riccati equations. Phase retrieval is a problem of recovering a signal from the magnitude of its Fourier transform. Phase retrieval problems are important in imagining science with applications in X-ray crystallography \cite{harrison1993phase,millane1990phase}, X-ray tomography \cite{dierolf2010ptychographic} and astronomy \cite{fienup1987phase}, and all phase retrieval problems can be formulated as a system of quadratic equations \cite{candes2015phase,candes2014solving,jaganathan2016phase,wang2017solving}. The algebraic Riccati equations are crucial in the study of stochastic and optimal control \cite{lancaster1995algebraic}, and they also are equivalently expressed as a system of quadratic equations. 

Here we demonstrate how to apply our methods to solve systems of quadratic equations. The general form of the problem we consider is:
\begin{equation}\label{eqn:quad_eq}
\textbf{Find}\;\; \bx \in \R^n \;\; \textbf{s.t.} \;\; \bx^\top \bm{Q}_i \bx  = b_i, \; i=1,\hdots m 
\end{equation}
where $\bm{Q}_i \in \R^{n\times n}$ and $\bb\in \R^m$. An equivalent matrix form of the problem is:
\begin{equation}
\textbf{Find}\;\; \bX \in \Symn_{+}, \; \text{rank}(\bX)=1 \;\; \textbf{s.t.} \;\; \langle \bm{Q}_i, \bX\rangle  = b_i, \; i=1,\hdots m  \nonumber 
\end{equation}
which naturally leads to the rank constrained model 
\begin{align}\label{eqn:quad_eq_matrix_opt}
\min&\; \sum_{i=1}^{m} \left( \langle \bm{Q}_i, \bX \rangle - b_i  \right)^2 \\
\text{s.t.}& \; \text{rank}(\bX)=1 \nonumber \\
           & \;  \bX \in \Symn_{+}. \nonumber 
\end{align}
If $\bX^*$ is computed yielding an objective value of zero to \eqref{eqn:quad_eq_matrix_opt}, then \eqref{eqn:quad_eq} has been solved. If no solution exists to \eqref{eqn:quad_eq}, then \eqref{eqn:quad_eq_matrix_opt} represents a least-squares approximate solution. A common relaxation of \eqref{eqn:quad_eq_matrix_opt} is to drop the rank constraint to obtain a convex model. A global solution to the convex relaxation can be computed and projected onto the set of rank-1 matrices to obtain an approximate solution to \eqref{eqn:quad_eq}. Another more precise relaxation of \eqref{eqn:quad_eq_matrix_opt} is made possible with our framework. We instead consider the model   
\begin{align}\label{eqn:quad_eq_matrix_opt_relax}
\min&\; \sum_{i=1}^{m} \left( \langle \bm{Q}_i, \bX \rangle - b_i  \right)^2 \\
\text{s.t.}& \; \blambda_i(\bX) \in [0, \delta], \;\; i=2,\hdots, n, \nonumber \\
           & \;  \bX \in \Symn \nonumber 
\end{align}
where $\delta >0$. For small $\delta$, the constraint closely approximates the rank-1 condition. It is easy to check this is a non-convex constraint on the eigenvalues which means the analysis in Section \ref{sec:solvers} does not guarantee convergence to stationary points; however, Theorem \ref{thm:projection_sol} guarantees we can compute optimal projections onto the constraint, so we can implement Algorithm \ref{alg:projected_grad} on \eqref{eqn:quad_eq_matrix_opt_relax}.

We considered three methods for solving \eqref{eqn:quad_eq}:
\begin{enumerate}
\item[]{\bf Newton:} Apply Newton's method directly to \eqref{eqn:quad_eq}. In our tests, we initialized and implemented Newton's method ten times and took the best result as the solution.
\item[]{\bf Convex + Newton:} Solve the convex relaxation of \eqref{eqn:quad_eq_matrix_opt}, project the solution onto the set of rank-1 matrices and then run Newton's method initialized from the projected solution. In our experiments, we utilized CVX to solve the convex relaxation \cite{cvx}.
\item[]{\bf SCO + Newton:} Apply Algorithm \ref{alg:projected_grad} to approximately solve \eqref{eqn:quad_eq_matrix_opt_relax}, project the solution onto the set of rank-1 matrices and then run Newton's method initialized from the projected solution. 
\end{enumerate}


Our experiments were performed on synthetic data. We simulated random instances of \eqref{eqn:quad_eq} by drawing each entry in the $\bm{Q}_i$'s from a standard normal distribution and projecting the resulting matrix onto the set of symmetric matrices. The coefficients $b_i$ were determined by randomly selecting a vector $\by \in R^n$ with entries from a standard normal and computing $\by^\top \bQ_i \by$ for all $i$. In this way, every system admitted a solution. We conducted ten numerical experiments; five with $(n,m)=(75,75)$ and five with $(n,m)=(100,100)$. The maximum allowed iterations for every instance of Newton's method was 5000, and the maximum iterations for Algorithm \ref{alg:projected_grad} was 10,000. We used $\delta = 1e-10$ in \eqref{eqn:quad_eq_matrix_opt_relax}. The error of a potential solution $\bx \in \R^n$ to \eqref{eqn:quad_eq} was measured as 
\begin{equation}
\label{eq:error_Quad}
    \text{error}(\bx) = \sum_{i=1}^{m} (\bx^\top \bm{Q}_i \bx - b_i)^2. 
\end{equation}

{\color{black}
We initialized Newton's method and Algorithm 1 in two ways. Table \ref{tab:my_table} displays the results when Newton's method was initialized randomly by sampling from a standard normal distribution and Algorithm 1 was initialized at a fixed diagonal matrix satisfying the constraint in \eqref{eqn:quad_eq_matrix_opt_relax}. 
Table \ref{tab:my_table_2} provides the results when Newton's method and Algorithm 1 were both initialized near a solution to \eqref{eqn:quad_eq}, that is, in all implementations of Newton's method each application of the method was initialized as
\begin{equation}\label{eqn:rand_int}
\bx^* + \eta \bm{\sigma},
\end{equation}
where $\bx^*$ was a solution to \eqref{eqn:quad_eq}, $\eta >0$ and each element of $\bm{\sigma}$ was sampled from a standard normal. 
Algorithm 1 was initialized at $\bx_0 \bx_0^\top$ where $\bx_0$ was generated by \eqref{eqn:rand_int}. 
We set $\eta = 0.4$ in our experiments. Note, Convex + Newton does not rely on initialization. 
}

The entries in each table provide the error of the approximated solutions obtained by the three methods above, and the errors obtained by solving the convex relaxation of \eqref{eqn:quad_eq_matrix_opt} and solving \eqref{eqn:quad_eq_matrix_opt_relax} with Algorithm \ref{alg:projected_grad} before application of Newton's method.  

\begin{table}[t]
\centering
\begin{tabular}{|cc|c|c|c|c|c|}
\hline
\multicolumn{2}{|l}{\textbf{Dimension ($n,m$)}}  & \multicolumn{1}{|c|}{\textbf{Newton}} & \multicolumn{1}{|c|}{\textbf{Convex}} & \multicolumn{1}{c}{\begin{tabular}[c]{@{}c@{}}\textbf {Convex}\\ \textbf{+Newton}\end{tabular}} & \multicolumn{1}{|c|}{\textbf{SCO}} & \multicolumn{1}{|c|}{\begin{tabular}[c]{@{}c@{}}\textbf{SCO}\\\textbf{+Newton}\end{tabular}} \\ \hline
\multicolumn{1}{|c|}{}                        & \textbf{Exp. 1} &   214.21  & 8.11e8 &  559.51  & 1.95  & {\bf 1.40}     \\ \cline{2-7} 
\multicolumn{1}{|c|}{}                        & \textbf{Exp. 2} &   331.68  & 6.30e8 &  385.27  & 14.44 & {\bf 3.80}     \\ \cline{2-7} 
\multicolumn{1}{|c|}{\textbf{(75,75)}}        & \textbf{Exp. 3} &   388.15  & 7.13e8 &  471.95  & 0.03  & {\bf 7.64e-11} \\ \cline{2-7} 
\multicolumn{1}{|c|}{}                        & \textbf{Exp. 4} &   69.09   & 6.76e8 &  570.88  & 37.95 & {\bf 6.16 }    \\ \cline{2-7} 
\multicolumn{1}{|c|}{}                        & \textbf{Exp. 5} &   154.01  & 1.03e9 &  141.76  & 54.30 & {\bf 7.37 }    \\ \hline \hline
\multicolumn{1}{|c|}{}                        & \textbf{Exp. 1} &   512.87  & 1.50e9 &  833.87  & 1.76  & {\bf 1.33 }    \\ \cline{2-7} 
\multicolumn{1}{|c|}{}                        & \textbf{Exp. 2} &   929.21  & 2.18e9 &  1.22e3  & 74.86 & {\bf 8.65 }    \\ \cline{2-7} 
\multicolumn{1}{|c|}{\textbf{(100,100)}}      & \textbf{Exp. 3} &   742.94  & 6.44e9 &  969.96  & 0.37  & {\bf 5.32e-13} \\ \cline{2-7} 
\multicolumn{1}{|c|}{}                        & \textbf{Exp. 4} &   660.32  & 2.05e9 &  386.52  & 2.25  & {\bf 3.06e-10} \\ \cline{2-7} 
\multicolumn{1}{|c|}{}                        & \textbf{Exp. 5} &   748.96  & 4.09e9 &  832.66  & 51.74 & {\bf 7.19}     \\ \hline
\end{tabular}
\caption{Errors of the three different solvers and the two initial solvers for different synthetic settings. The column titled ``Dimension" states the size of the matrices, $\bm{Q}_i \in \R^{n \times n}$, and the number of quadratic equations in \eqref{eqn:quad_eq}; the rows give the individual experiments for each dimension. 
The errors (according to \eqref{eq:error_Quad}) of the proposed methods are given below their respective headers. 
The column titled ``Convex" provides the error of the  
rank-1 projection of the solution to the convex relaxation of \eqref{eqn:quad_eq_matrix_opt};
the column ``SCO" states the error of the rank-1 projection of the approximate solution to \eqref{eqn:quad_eq_matrix_opt_relax} with $\delta = 1e-10$.
The bolded numbers mark the best error obtained for each experiment.}
\label{tab:my_table}
\end{table}

In Table \ref{tab:my_table}, we see
{SCO + Newton} significantly outperformed {Newton} and {Convex + Newton} in our experiments. 
The results show {SCO + Newton} located solutions in three of the ten tests while the other methods failed to compute any solutions to \eqref{eqn:quad_eq}. 
We note also the rank-1 projections of the approximated solutions to \eqref{eqn:quad_eq_matrix_opt_relax}, obtained by Algorithm \ref{alg:projected_grad}, given under header {``SCO"} in Table \ref{tab:my_table}, were always better than the approximated solutions of {Newton} and {Convex + Newton}. 
The vast discrepancy between the projected solutions of the convex relaxation, given under the header {``Convex"} in Table \ref{tab:my_table}, and \eqref{eqn:quad_eq_matrix_opt_relax} demonstrate the over-relaxed nature of removing the rank condition from \eqref{eqn:quad_eq_matrix_opt}. 
Though a global minimizer to \eqref{eqn:quad_eq_matrix_opt_relax} was not computed, the resulting approximate solution was significantly better than the global minimizer of the convex relaxation, by at least a factor of $10^6$. 
{\color{black}
In Table \ref{tab:my_table_2}, we observe SCO + Newton was able to obtain accurate solutions in nine of the ten experiments when being initialized near a solution to the system of equations. Newton, however, failed to locate any solutions while being initialized in the same neighborhood. We note further in Experiments 4 and 5 when $(n,m)=(75,75)$ that Algorithm \ref{alg:projected_grad} obtained a sufficiently accurate solution to \eqref{eqn:quad_eq} without applying Newton's method. These experiments showcase how our methodology effectively augments the local convergence of Newton's method.}
Thus, the framework offered by \eqref{eqn:SCO-Eig} presents a significantly better relaxation of the rank-1 condition which far surpasses the standard convex relaxation. This lends support for our framework being a means of relaxing rank constraints, and the results displayed Algorithm \ref{alg:projected_grad} can still perform well even when the constraint set is non-convex. 

\begin{table}[t]
\centering
\begin{tabular}{|cc|c|c|c|}
\hline
\multicolumn{2}{|l}{\textbf{Dimension ($n,m$)}}  & \multicolumn{1}{|c|}{\textbf{Newton}} & \multicolumn{1}{|c|}{\textbf{SCO}} & \multicolumn{1}{c|}{\begin{tabular}[c]{@{}c@{}}\textbf {SCO}\\ \textbf{+Newton}\end{tabular}} \\ \hline
\multicolumn{1}{|c|}{} & \textbf{Exp. 1} &   171.20  & 2.57 &  {\bf 2.62e-9}      \\ \cline{2-5} 
\multicolumn{1}{|c|}{} & \textbf{Exp. 2} &   68.98  & 0.14 &  {\bf 4.06e-13}       \\ \cline{2-5} 
\multicolumn{1}{|c|}{\textbf{(75,75)}}        & \textbf{Exp. 3} &   200.52  & 0.01 &  {\bf7.17e-9}   \\ \cline{2-5} 
\multicolumn{1}{|c|}{}                        & \textbf{Exp. 4} &   83.27   & 6.24e-5 &  {\bf2.40e-10}      \\ \cline{2-5} 
\multicolumn{1}{|c|}{}                        & \textbf{Exp. 5} &   93.94  & 1.24e-4 &  {\bf1.14e-9}      \\ \hline \hline
\multicolumn{1}{|c|}{}                        & \textbf{Exp. 1} &   347.66  & 0.16 &  {\bf 1.86e-10}      \\ \cline{2-5} 
\multicolumn{1}{|c|}{}                        & \textbf{Exp. 2} &   207.88  & {\bf0.01} &  0.09     \\ \cline{2-5} 
\multicolumn{1}{|c|}{\textbf{(100,100)}}      & \textbf{Exp. 3} &   151.35  & 0.59 &  {\bf 2.18e-13}   \\ \cline{2-5} 
\multicolumn{1}{|c|}{}                        & \textbf{Exp. 4} &   410.53  & 0.47 &  {\bf 2.14e-13}   \\ \cline{2-5} 
\multicolumn{1}{|c|}{}                        & \textbf{Exp. 5} &   193.35  & 0.12 &  {\bf2.05e-13}       \\ \hline
\end{tabular}
\caption{Errors of Newton, SCO + Newton,  and one initial solver for different synthetic settings with initialization near an optimal solution. The column titled ``Dimension" states the size of the matrices, $\bm{Q}_i \in \R^{n \times n}$, and the number of quadratic equations in \eqref{eqn:quad_eq}; the rows give the individual experiments for each dimension. 
The errors (according to \eqref{eq:error_Quad}) of the proposed methods are given below their respective headers.
The column ``SCO" states the error of the rank-1 projection of the approximate solution to \eqref{eqn:quad_eq_matrix_opt_relax} with $\delta = 1e-10$.
The bolded numbers mark the best error obtained for each experiment.}
\label{tab:my_table_2}
\end{table}

\section{Conclusion}\label{sec:conclusion}
This paper investigates the first matrix optimization model with linear inequality constrained eigenvalues. 
Theory was developed to understand the nature of the eigenvalue constraint set, and we presented KKT conditions for \eqref{eqn:SCO-Eig}. We additionally verified the accuracy of a relaxation which ensured the differentiability of the eigenvalue operator over the feasible domain. 
We proved global minima can be computed to \eqref{eqn:SCO-Eig} for linear objective problems, independent of the convexity of the constraint, and we proved how to compute exact projections. The computational complexity to obtain these global solutions is polynomial and only requires performing a spectral decomposition and solving a simple convex model, e.g., a linear program. 
Using these results, we developed two algorithms which compute first-order $\epsilon$-stationary points for \eqref{eqn:SCO-Eig} with a sublinear convergence rate. 
The practicality of our algorithms was accessed through numerical experimentation. Our example on solving systems of quadratic equations showcased a new and effective method for relaxing rank-constrained models.    

Many future directions are open for investigation. First, we plan on extending the framework of \eqref{eqn:SCO-Eig} to include equality constraints on the coordinates of the matrix,
\begin{align}\label{eqn:new_SCO-Eig}
\min&\; F(\bX) \\
\text{s.t.}&\; G_i(\bX) = 0, i = 1, \hdots, p \nonumber \\
           &\; \bA \lambda(\bX) \leq \bb \nonumber \\
           &\; \bX \in \Symn. \nonumber 
\end{align}
This paradigm will encompass most matrix optimization models studied over symmetric matrices and allow new spectral constraints never before considered. Additionally, we plan to develop approaches to solve models which constrain the singular values of non-square matrices and generalized singular values of tensors. As the applications of tensors expand in the burgeoning field of data science, the capability to manipulate the spectrum of these mathematical objects will become ever more important.

\section{Appendix}
\subsection{Proof for Section \ref{sec:constraint_set}}\label{sec:appen_1}
The convexity of $f_i$ in Theorem \ref{thm:convex_constraint} follows from the following proposition. 
\begin{proposition}\label{thm:f_convex}
If $\alpha_1 \geq \hdots \geq \alpha_n$, then $f(\bX) = \sum_{i=1}^{n} \alpha_i \lambda_i(\bX)$ is a convex function.
\end{proposition}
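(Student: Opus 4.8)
The plan is to express $f$ as a nonnegative combination of the partial spectral sums $s_k(\bX) := \sum_{i=1}^{k} \lambda_i(\bX)$, each of which is convex, together with a single linear term. First I would recall the Ky Fan maximum principle, which gives the variational representation
\[
s_k(\bX) = \max\left\{ \trace(\bU^\top \bX \bU) \; | \; \bU \in \R^{n\times k}, \; \bU^\top \bU = \bm{I} \right\}.
\]
For each fixed admissible $\bU$ the map $\bX \mapsto \trace(\bU^\top \bX \bU) = \langle \bU\bU^\top, \bX\rangle$ is linear in $\bX$, so $s_k$ is a pointwise maximum of linear functions and is therefore convex on $\Symn$ for every $k \in [n]$. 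For $k = n$ the maximum is attained at every orthogonal $\bU$, so $s_n(\bX) = \trace(\bX)$ is in fact linear.

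Next I would reorganize $f$ by summation by parts. Setting $\alpha_{n+1} := 0$ and interchanging the order of summation,
\[
\sum_{k=1}^{n} (\alpha_k - \alpha_{k+1}) s_k(\bX) = \sum_{i=1}^{n} \lambda_i(\bX) \sum_{k=i}^{n}(\alpha_k - \alpha_{k+1}) = \sum_{i=1}^{n} \alpha_i \lambda_i(\bX) = f(\bX),
\]
where the inner telescoping sum collapses to $\alpha_i - \alpha_{n+1} = \alpha_i$. Peeling off the last index gives the decomposition $f = \sum_{k=1}^{n-1}(\alpha_k - \alpha_{k+1}) s_k + \alpha_n \trace(\bX)$.

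The hypothesis $\alpha_1 \geq \cdots \geq \alpha_n$ guarantees $\alpha_k - \alpha_{k+1} \geq 0$ for $k = 1, \dots, n-1$, so the first sum is a nonnegative combination of the convex functions $s_1, \dots, s_{n-1}$ and hence convex; the remaining term $\alpha_n \trace(\bX)$ is linear and thus convex regardless of the sign of $\alpha_n$. Since a sum of convex functions is convex, $f$ is convex, completing the argument.

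The only nontrivial ingredient is the convexity of the partial sums $s_k$, i.e.\ the Ky Fan principle; everything else is the bookkeeping of Abel summation and the elementary fact that nonnegative combinations and sums preserve convexity. I would therefore treat the variational formula for $s_k$ as the crux — it may either be invoked as a standard result or established directly by observing that $\trace(\bU^\top \bX \bU)$ is maximized over the Stiefel manifold $\{\bU : \bU^\top \bU = \bm{I}\}$ precisely when the columns of $\bU$ span the eigenspace associated with the $k$ largest eigenvalues of $\bX$.
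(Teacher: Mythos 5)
Your proof is correct, but it follows a different route from the paper's. The paper also starts from the convexity of the partial sums $g_k(\bX)=\sum_{i=1}^{k}\lambda_i(\bX)$, but it additionally invokes the concavity of the partial sums of the \emph{smallest} eigenvalues, splits the coefficient vector as $\bm{\alpha}=\by+\bz$ with $y_1\geq\hdots\geq y_n\geq 0$ and $z_n\leq\hdots\leq z_1\leq 0$, and handles the two pieces separately (nonnegative decreasing weights on largest-eigenvalue sums give a convex function; nonpositive weights on smallest-eigenvalue sums give the negative of a concave function). Your Abel-summation decomposition $f=\sum_{k=1}^{n-1}(\alpha_k-\alpha_{k+1})s_k+\alpha_n\operatorname{tr}(\bX)$ sidesteps that splitting entirely: because $s_n=\operatorname{tr}$ is linear, the sign of $\alpha_n$ is irrelevant, and only the single fact that $s_1,\hdots,s_{n-1}$ are convex (Ky Fan, or the pointwise-maximum representation you give) is needed. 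This is arguably cleaner and more self-contained than the paper's argument, which leaves the ``simple operations which preserve convexity'' step implicit (and that implicit step is essentially your telescoping identity anyway). Both proofs are valid; yours buys a shorter chain of dependencies, while the paper's makes visible the symmetric roles of the largest- and smallest-eigenvalue sums.
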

\begin{proof}
It is well-known $g_k(\bX) := \sum_{i=1}^{k}\lambda_i(\bX)$ and $h_p(\bX) := \sum_{j=0}^{p}\lambda_{n-j}(\bX)$ are convex and concave functions respectively for any $1 \leq k\leq n$ and $0\leq p\leq n-1$ \cite{boyd2004convex}. From this and simple operations which preserve convexity it follows $g_{\alpha}(\bX) := \sum_{i=1}^{n} \alpha_i \lambda_i(\bX)$ and $h_{\beta}(\bX) := \sum_{j=0}^{n-1}\beta_{n-j} \lambda_{n-j}(\bX)$ with $\alpha_1 \geq \hdots \geq \alpha_n \geq 0$ and $\beta_n \leq \hdots \leq \beta_1 \leq 0$ are convex. Thus,  
\[
g_{\alpha}(\bX) + h_{\beta}(\bX) = \sum_{i=1}^{n}(\alpha_i + \beta_i)\lambda_i(\bX)
\]
is convex. The equivalence of the sets 
$\left\{ \by + \bz \in \R^n \; | \; y_1 \geq \hdots \geq y_n \geq 0, \; z_n \leq \hdots \leq z_1 \leq 0 \right\}$
and 
$
\left\{ \bx \in \R^n \; | \; x_1 \geq \hdots \geq x_n \right\}
$ completes the argument. 
\end{proof}

\subsection{Proof of Theorem \ref{thm:projected_grad_Thm}}
We restate and prove the convergence result for Algorithm \ref{alg:projected_grad}. 
Our argument is given for a general vector space $\mathcal{V}$ with associated inner product $\langle \cdot, \cdot \rangle_{\cV}$ and induced norm $\|\cdot\|_{\cV}$. For the remainder of this section, we drop the subscript and refer to the inner product and norm on $\mathcal{V}$ as $\langle \cdot, \cdot \rangle$ and $\|\cdot\|$ respectively. 
Thus, the Frobenius norm used in the description of Algorithm \ref{alg:projected_grad} has been replaced with our general norm for $\cV$. 
We assume the norm associated with $\mathcal{V}$ defines the first-order $\epsilon$-stationary condition. 
Let $P_C^\delta$ denote the inexact projection onto a convex subset $\mathcal{C}$ of $\cV$. 
\begin{theorem}\label{thm:gen_projected_grad_thm}
Assume $F$ is gradient Lipschitz with parameter $L>0$, the initial level set, i.e., $\left\{ \bX \in \cC \;|\; F(\bX) \leq F(\bX_0) \right\}$, is a bounded subset of $\cV$ with diameter $D$, there exists $F^*$ such that $F^* \leq F(\bX)$ for all $\bX\in \cC$, there exists $M>0$ such that $\|\nabla F(\bX_k)\| \leq M$ for all $k\geq 0$, and $\cC$ is a convex subset of the normed vector space $\cV$. If inexact projections are computed with sufficient accuracy, dependent on $\epsilon$, then Algorithm \ref{alg:projected_grad} will converge to a first-order $\epsilon$-stationary point of $\min\{ F(\bX) \; | \; \bX \in \mathcal{C}\}$ in $\mathcal{O}(\epsilon^{-2})$ iterations. More explicitly, a first-order $\epsilon$-stationary point will be returned after no more than 
\[
  \left( \frac{4(D+Mh_{\text{low}}+1)^2}{h_{\text{low}}^2}\right) \left(\frac{F(\bX_0) - F^*}{\alpha}\right)\frac{1}{\epsilon^2}                                    \text{ iterations} 
\]
provided the accuracy of the inexact projections, $\delta$, satisfies $\delta \leq \min\left\{ \frac{h_{\text{low}}}{2}\epsilon, \; \frac{1}{2}\epsilon_{\text{tol}}^2\right\}$, where $h_{low} := \tau_1/(L+2\alpha)$ and 
$
\epsilon_{\text{tol}}:= \frac{h_{\text{low}} \epsilon}{2(D+Mh_{\text{low}}+1)}. 
$ 
\end{theorem}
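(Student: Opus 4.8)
The plan is to treat Algorithm \ref{alg:projected_grad} as a standard inexact projected-gradient scheme and to establish two complementary facts: a per-iteration \emph{sufficient-decrease} guarantee that caps the number of non-terminating iterations, and a \emph{stationarity certificate} that converts the stopping test into the first-order $\epsilon$-stationarity of Definition \ref{def:stat_pt}. The engine for both is the variational inequality defining the inexact projection: since $\bX_{k+1} = P_C^\delta(\bX_k - h_k \nabla F(\bX_k))$ and $\bX_k \in \cC$ is feasible, taking $\bX = \bX_k$ as the test point in \eqref{eqn:inexact_proj} gives $\langle \nabla F(\bX_k), \bX_{k+1} - \bX_k\rangle \le \tfrac{1}{h_k}\left(-\|\bX_{k+1}-\bX_k\|^2 + \delta\right)$, which will be reused in both halves.

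First I would establish the sufficient-decrease step together with the lower bound $h_k \ge h_{\text{low}}$ on the accepted step size. Combining the gradient-Lipschitz descent inequality $F(\bX_{k+1}) \le F(\bX_k) + \langle \nabla F(\bX_k), \bX_{k+1}-\bX_k\rangle + \tfrac{L}{2}\|\bX_{k+1}-\bX_k\|^2$ with the displayed inequality yields $F(\bX_{k+1}) \le F(\bX_k) - (\tfrac{1}{h_k}-\tfrac{L}{2})\|\bX_{k+1}-\bX_k\|^2 + \tfrac{\delta}{h_k}$. When the trial step obeys $h_k \le 1/(L+2\alpha)$ the coefficient $\tfrac{1}{h_k}-\tfrac{L}{2}$ exceeds $\alpha$ by the margin $\tfrac{L}{2}+\alpha$, and the hypothesis $\delta \le \tfrac12 \epsilon_{\text{tol}}^2$ is calibrated so that the residual $\tfrac{\delta}{h_k}$ is dominated by that margin on any step where the displacement stays above $\epsilon_{\text{tol}}$; hence the Armijo test $F(\bX_{k+1}) \le F(\bX_k) - \alpha\|\bX_{k+1}-\bX_k\|^2$ passes. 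Since backtracking scales by $\tau_1$, the accepted step satisfies $h_k \ge \tau_1/(L+2\alpha) = h_{\text{low}}$ and the while-loop terminates.

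Next I would telescope and then prove the certificate. On every iteration that does not return, the accepted iterate satisfies $F(\bX_k) - F(\bX_{k+1}) \ge \alpha\|\bX_{k+1}-\bX_k\|^2 \ge \alpha\,\epsilon_{\text{tol}}^2$, so summing against the lower bound $F^*$ shows at most $(F(\bX_0)-F^*)/(\alpha\,\epsilon_{\text{tol}}^2)$ such iterations occur; substituting $\epsilon_{\text{tol}} = \tfrac{h_{\text{low}}\epsilon}{2(D+Mh_{\text{low}}+1)}$ reproduces the stated $\cO(\epsilon^{-2})$ count. For the returned point I would again invoke \eqref{eqn:inexact_proj}, now with an arbitrary feasible $\bX$ satisfying $\|\bX - \bX_k\|\le 1$, and split $\langle \nabla F(\bX_k), \bX - \bX_k\rangle = \langle \nabla F(\bX_k), \bX - \bX_{k+1}\rangle + \langle \nabla F(\bX_k), \bX_{k+1}-\bX_k\rangle$. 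The variational inequality lower-bounds the first piece by $-\tfrac{\delta}{h_k} - \tfrac{1}{h_k}\langle \bX_{k+1}-\bX_k, \bX - \bX_{k+1}\rangle$; bounding the cross term by Cauchy--Schwarz with $\|\bX-\bX_{k+1}\|$ controlled through the diameter $D$, the second piece by $M\|\bX_{k+1}-\bX_k\|$, and then using $\|\bX_{k+1}-\bX_k\|\le \epsilon_{\text{tol}}$, $h_k \ge h_{\text{low}}$, and $\delta \le \tfrac{h_{\text{low}}}{2}\epsilon$, collapses everything to $\langle \nabla F(\bX_k), \bX - \bX_k\rangle \ge -\tfrac{\epsilon}{2} - \tfrac{\epsilon}{2} = -\epsilon$, which is exactly Definition \ref{def:stat_pt}.

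The main obstacle is propagating the projection slack $\delta$ through both halves at once, since the same $\delta$ must be small on two different scales: in the line search it must be dominated by the sufficient-decrease margin (forcing $\delta \lesssim \epsilon_{\text{tol}}^2$), while in the certificate it enters additively through $1/h_k$ (forcing $\delta \lesssim h_{\text{low}}\epsilon$), and the hypothesis $\delta \le \min\{\tfrac{h_{\text{low}}}{2}\epsilon, \tfrac12\epsilon_{\text{tol}}^2\}$ is precisely what reconciles them. A subtler difficulty is the bookkeeping between the displacement at the full trial step (which governs the stopping test) and the backtracked displacement (which actually appears in the decrease and in the certificate): because the term $\delta/h_k$ grows as $h_k$ shrinks, one must guarantee the displacement cannot collapse below $\sqrt{2\delta}$ before an acceptable step is reached, which I would secure via the monotonicity of $h \mapsto \|\bX - P_C(\bX - h\nabla F(\bX))\|$ and of $h \mapsto \tfrac1h\|\bX - P_C(\bX - h\nabla F(\bX))\|$ (adapted to the inexact projection), keeping the accepted step in $[h_{\text{low}},h]$. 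This is exactly where the uniform gradient bound $M$ and the finite level-set diameter $D$ are consumed to pin down the constant in $\epsilon_{\text{tol}}$.
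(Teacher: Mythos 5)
Your proposal is correct and follows essentially the same route as the paper's proof: the same variational inequality for the inexact projection tested first at $\bX_k$ (yielding the Armijo decrease and the lower bound $h_k \ge h_{\text{low}}$) and then at an arbitrary feasible $\bX$ with $\|\bX-\bX_k\|\le 1$ (yielding the stationarity certificate via the same splitting through $\bX_{k+1}$, with $D$, $M$, and $h_{\text{low}}$ consumed exactly as you describe), and the same telescoping count with the same constants. The subtlety you flag about the backtracked displacement possibly falling below $\epsilon_{\text{tol}}$ mid-line-search is real but is handled no more explicitly in the paper's own argument, which simply applies the certificate at whichever iteration the accepted displacement is small.
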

%
\begin{proof}
Compute $\bX_{k+1} \in \text{P}_{ \mathcal{C}}^{\delta}(\bX_k  - h_k \nabla F(\bX_k))$. 
If $\|\bX_{k+1}-\bX_k\|\leq \epsilon_{\text{tol}}$, we shall prove $\bX_k$ is a first-order $\epsilon$-stationary point. 
If $\|\bX_{k+1}-\bX_k\| > \epsilon_{\text{tol}}$, we will show a sufficient decrease can be obtained provided the stepsize $h_k$ and precision $\delta>0$ are appropriately sized. 
To these ends, assume $\|\bX_{k+1}-\bX_k\| > \epsilon_{\text{tol}}$. 
If $h_k \leq (L+2\alpha)^{-1}$ and $\delta \leq \frac{1}{2} \epsilon_{\text{tol}}^2$, then it follows 
\begin{align}
&\langle \bX_k  - h_k \nabla F(\bX_k) - \bX_{k+1}, \bX_k - \bX_{k+1} \rangle \leq \delta  \nonumber \\
\implies &\langle h_k \nabla F(\bX_k), \bX_{k+1} - \bX_k \rangle \leq \delta - \|\bX_{k+1}-\bX_k\|^2 \nonumber \\
\implies & \langle \nabla F(\bX_k), \bX_{k+1}-\bX_k \rangle  \leq h_k^{-1} \left(\delta - \|\bX_{k+1}-\bX_k\|^2  \right). \nonumber 
\end{align}
By our assumptions, $\delta \leq \frac{1}{2}\epsilon_{\text{tol}}^2 \leq \frac{1}{2} \|\bX_{k+1} - \bX_k\|^2$. 
Therefore, 
\begin{align}
\langle \nabla F(\bX_k), \bX_{k+1}-\bX_k \rangle  \leq \frac{-h_k^{-1}}{2} \|\bX_{k+1}-\bX_k\|^2 &\leq  -\left(\frac{L+2\alpha}{2}\right)\|\bX_{k+1} - \bX_k\|^2. \nonumber 
\end{align}
Thus, from the gradient Lipschitz assumption on $F$ and the above inequality, 
\begin{align}\label{eqn:help_F_decr}
F(\bX_{k+1}) &\leq F(\bX_k) + \langle \nabla F(\bX_k), \bX_{k+1} - \bX_{k} \rangle + \frac{L}{2}\|\bX_{k+1} - \bX_k \|^2 \nonumber \\
&\leq F(\bX_k) - \left( -\frac{L}{2} + \alpha + \frac{L}{2} \right)\|\bX_{k+1} - \bX_k \|^2 \nonumber \\
&= F(\bX_k) - \alpha \|\bX_{k+1} - \bX_k \|^2.
\end{align}
Hence, the line-search shall terminate with a sufficient decrease obtained after no more than $\lceil \log_{2}\left( h(2\alpha + L) \right) \rceil$ inner iterations. 
So, for all $k \geq 0$ we have 
\[
\|\bX_{k+1} - \bX_k \|^2 \leq \frac{F(\bX_k) - F(\bX_{k+1})}{\alpha}.
\]
Summing this inequality up from $k=0$ to $k=K-1$ and using the lower bound $F^*$ we obtain
\begin{equation}\label{eqn:worst_case}
\min_{0 \leq k \leq K-1} \|\bX_{k+1} - \bX_k \|^2 \leq \left(\frac{F(\bX_0) - F^*}{\alpha}\right)\frac{1}{K}.
\end{equation}
We now prove if consecutive iterates generated by Algorithm \ref{alg:projected_grad} are sufficiently close together and the approximated projections are sufficiently accurate then a first-order $\epsilon$-stationary point has been obtained. 
By the Lipschitz gradient assumption on $F$ and the boundedness of the initial level set there exists $M>0$ such that $\|\nabla F(\bX_k)\| \leq M$ for all $k\geq 0$. 
Let $D$ be the diameter of the initial level set where the diameter of a set $S$ is defined as $\sup\left\{\|\bx - \by\| \; | \; \bx, \by \in S \right\}$. 
$D$ is finite due to the boundedness of the initial level. 
Also, since a sufficient decrease is obtained if $h_k \leq (L+2\alpha)^{-1}$, it follows $h_{\text{low}} = \tau_1(L+2\alpha)^{-1}$ lower bounds the stepsize $h_k$ for all $k\geq 0$.  
Assume $\|\bX_k - \bX_{k+1}\| \leq  \epsilon_{\text{tol}}$ and $\delta \leq \min\left\{\frac{h_{\text{low}}}{2}\epsilon, \; \frac{1}{2} \epsilon_{\text{tol}}^2 \right\}$, where 
\[
\epsilon_{\text{tol}}:= \frac{h_{\text{low}} \epsilon}{2(D+Mh_{\text{low}}+1)}. 
\]
Then for all $\bX \in \mathcal{C}$ such that $\|\bX - \bX_{k}\| \leq 1$ we have 
\begin{align}
&\langle \bX_k - h_k \nabla F(\bX_k) - \bX_{k+1}, \; \bX - \bX_{k+1}\rangle \leq \delta \nonumber \\
\implies &\langle - h_k \nabla F(\bX_k), \; \bX - \bX_{k+1}\rangle \leq \delta + \langle \bX_{k+1} - \bX_k,  \bX - \bX_{k+1} \rangle \nonumber \\
\implies &\langle - h_k \nabla F(\bX_k), \; \bX - \bX_{k+1}\rangle \leq \delta + \|\bX_{k+1} - \bX_k \|\cdot \| \bX - \bX_k + \bX_k - \bX_{k+1} \| \nonumber \\
\implies &\langle - h_k \nabla F(\bX_k), \; \bX - \bX_{k+1}\rangle \leq \delta + \|\bX_{k+1} - \bX_k \|\cdot \left(\|\bX - \bX_k\| + \| \bX_k - \bX_{k+1}\| \right) \nonumber \\ 
\implies &\langle - h_k \nabla F(\bX_k), \; \bX - \bX_{k+1}\rangle \leq \delta + (1+D)\|\bX_{k+1} - \bX_k \|\nonumber \\
\implies &\langle \nabla F(\bX_k), \bX - \bX_{k+1} \rangle \geq -\delta h_k^{-1} - (1+D)h_k^{-1} \|\bX_{k+1} - \bX_k\| \nonumber \\ 
\implies &\langle \nabla F(\bX_k), \bX - \bX_{k+1} \rangle \geq -\delta h_{\text{low}}^{-1} - (1+D)h_{\text{low}}^{-1} \|\bX_{k+1} - \bX_k\| \nonumber \\
\implies &\langle \nabla F(\bX_k), \bX - \bX_{k} \rangle = -\delta h_{\text{low}}^{-1} - (1+D)h_{\text{low}}^{-1} \|\bX_{k+1} - \bX_k\| - \langle \nabla F(\bX_k), \bX_k - \bX_{k+1} \rangle \nonumber \\
\implies &\langle \nabla F(\bX_k), \bX - \bX_{k} \rangle \geq -\delta h_{\text{low}}^{-1} - (1+D)h_{\text{low}}^{-1} \|\bX_{k+1} - \bX_k\| - M \|\bX_k - \bX_{k+1}\| \nonumber \\
\implies &\langle \nabla F(\bX_k), \bX - \bX_{k} \rangle = -\delta h_{\text{low}}^{-1} - ((1+D)h_{\text{low}}^{-1}+M) \|\bX_{k+1} - \bX_k\| \nonumber \\
\implies &\langle \nabla F(\bX_k), \bX - \bX_{k} \rangle \geq -\epsilon 
\end{align}
where the first implication comes from the definition of $\bX_{k+1} \in \text{P}_{ \mathcal{C}}^{\delta}(\bX_k  - h_k \nabla F(\bX_k))$, the sixth comes from $h_{\text{low}}$ lower bounding $h_k$ for all $k$, the eighth is a product of the bound on the norm of the gradient of $F$, and the last implication follows from the bounds on $\delta$ and $\|\bX_k - \bX_{k+1}\|$. 
Therefore, under these conditions $\bX_k$ is a first-order $\epsilon$-stationary point, and from \eqref{eqn:worst_case} a point satisfying $\| \bX_k - \bX_{k+1}\| \leq \epsilon_{\text{tol}}$ will be obtained within $K$ iterations provided 
\[
K \geq 
    \left( \frac{4(D+Mh_{\text{low}}+1)^2}{h_{\text{low}}^2}\right) \left(\frac{F(\bX_0) - F^*}{\alpha}\right)\cdot \frac{1}{\epsilon^2}.           
\]
\end{proof} 
\section*{Acknowledgments}
This material is based upon work supported by the National Science Foundation Graduate 
Research Fellowship Program under Grant No. 2237827 and NSF Award DMS-2152766. Any opinions, findings, 
and conclusions or recommendations expressed in this material are those of the author(s) 
and do not necessarily reflect the views of the National Science Foundation.

   

\end{document}